\numberwithin{equation}{section}
\theoremstyle{plain}
\newtheorem{theorem}{Theorem}[section]
\newtheorem{corollary}[theorem]{Corollary}
\newtheorem{proposition}[theorem]{Proposition}
\newtheorem{lemma}[theorem]{Lemma}
\theoremstyle{definition}
\newtheorem{definition}[theorem]{Definition}
\theoremstyle{remark}
\newtheorem{remark}[theorem]{Remark}
\newtheorem{example}[theorem]{Example}
\begin{document}
\title[]{On almost revlex ideals with Hilbert function of complete intersections}

\author{Cristina Bertone}
\address{Dipartimento di Matematica \lq\lq G. Peano\rq\rq, Universit\`{a} degli Studi di Torino, Italy}
\email{cristina.bertone@unito.it}

\author{Francesca Cioffi
}
\address{Dipartimento di Matematica e Applicazioni \lq\lq R. Caccioppoli\rq\rq, Universit\`{a} degli Studi di Napoli Federico II, Italy}
\email{cioffifr@unina.it}

\begin{abstract}
In this paper, we investigate the behavior of almost reverse lexicographic ideals with the Hilbert function of a complete intersection. More precisely, over a field $K$, we give a new constructive proof of the existence of the almost revlex ideal $J\subset K[x_1,\dots,x_n]$,  with the same Hilbert function as a complete intersection defined by $n$ forms of degrees $d_1\leq \dots \leq d_n$. Properties of the reduction numbers for  an almost revlex ideal have an important role in our inductive and constructive proof, which is different from the more general construction given by Pardue in \cite{Pardue}.
We also detect  several cases in which an almost revlex ideal having the same Hilbert function as a complete intersection 
corresponds to a singular point in a Hilbert scheme. This second result is the outcome of a more general study of lower bounds for the dimension of the tangent space to a Hilbert scheme at stable ideals, in terms of the number of minimal generators.
\end{abstract}

\keywords{Almost revlex ideal, reduction number, complete intersection, Hilbert scheme}

\date{\today}
\maketitle

%
%


\section*{Introduction}

In this paper, we investigate the behavior of almost reverse lexicographic ideals with the Hilbert function of a complete intersection. 

Referring to \cite{Kunz}, recall that a proper ideal $I$ in a Noetherian ring is called a {\em complete intersection} if the length of the shortest system of minimal generators of $I$ is equal to the height of $I$. A proper ideal $I$ that is generated by a regular sequence in a Noetherian ring is a complete intersection and the converse holds if the ring is Cohen-Macaulay, like a polynomial ring over a field.  Moreover every ideal in a Noetherian ring has a system of generators containing a complete intersection with the same dimension.

The existence of the almost reverse lexicographic ideal with a given Hilbert function is interesting in the study of general schemes with a given Hilbert function in Algebraic Geometry. 
For example, Moreno-Soc\'{i}as' conjecture (see \cite[Conjecture 4.1]{MS}) states that the generic initial ideal (with respect to degrevlex term order) of a polynomial ideal $I\subset R:=K[x_1,\dots,x_n]$ generated by $r$ generic forms is the almost reverse lexicographic ideal $J$ such that  the Hilbert function  of $R/J$ is the same  as that of $R/I$
 (see \cite{AAL, MS, HSW, ChoPark, CG, Cimp} for some partial solutions to this conjecture). It is noteworthy that Moreno-Soc\'{i}as' conjecture implies Fr\"oberg's conjecture (see \cite{Froberg,Pardue,ChoPark} and the references therein, and \cite{N,Trung} for some very recent contributions on the latter conjecture).
Artinian almost reverse lexicographic ideals 
 in $n$ variables also have a significant role in the study of the Lefschetz properties, because they have the maximal possible Betti numbers among all Artinian polynomial ideals that satisfy these properties with order $n$ \cite{Co,HW2009}.

Our investigation focuses on almost reverse lexicographic ideals (Definition \ref{def:revlex}) with the Hilbert function of a complete intersection and  starts from the following more general observation. 
Let $J$ be an Artinian strongly stable ideal in the polynomial ring $R$. It is straightforward that there exists an integer $\ell$ such that, for every $t\geq \ell$, every term of degree $t$ outside $J$ is divisible by the smallest variable. As a consequence, the Hilbert function of $R/J$ is decreasing from $\ell$ on. The minimal integer $\ell$ with this property is strictly connected  with the first reduction number of an Artinian $K$-algebra.

In case we deal with the Hilbert function $H$ of a complete intersection defined by $n$ forms of degrees $d_1 \leq \dots \leq d_n$ in $R$, we describe an explicit construction of the almost reverse lexicographic ideal $J\subseteq R$ such that $H$ is the Hilbert function of $R/J$ (Theorem~\ref{th:main}). The minimal integer $\ell$ with the above property has an important role  in the proof of this result. 
More precisely, properties of reduction numbers of almost reverse lexicographic  ideals are crucial in the inductive and constructive proof we provide, together with the combinatorial properties of the first expansion of the sous-escalier of  a stable ideal  (see \cite{MR2}) and the particular structure of the Hilbert function of a complete intersection (see~\cite{RRR,Pardue}). Partial results of our construction have been presented in \cite{BC-EACA}.

In \cite[Theorems 4 and 5, Corollary 6]{Pardue}, K. Pardue gave a complete characterization of the Hilbert functions that admit almost reverse lexicographic ideals, and among them there are the Hilbert functions of complete intersections. Our proof follows a different path from that  used by Pardue  thus still providing a new insight into the case of complete intersections. 

From our study of the reduction numbers for an almost revlex ideal  $J$, a closed formula for the number of minimal   generators of $J$  arises (see Theorem~\ref{thm:mingen}). 
Using this formula, and a lower bound on the dimension of the Zariski tangent space to a Hilbert scheme at a point corresponding to a stable ideal (Corollary \ref{cor:lower bound}), we exhibit several cases in which  the point corresponding to an Artinian almost reverse lexicographic ideal with the Hilbert function of a complete intersection is singular in the Hilbert scheme (see Section 
  \ref{sec:singrevlex}). The main tools for this result are taken from the more general study of marked schemes over quasi-stable ideals \cite{BCR_Macaulay}, similarly to \cite[Section 6]{CLMR2011} for reverse lexicographic ideals with the Hilbert function of general points.


\section{Background}

Let $R:=K[x_1,\dots,x_n]$ be the polynomial ring over a field $K$ in $n$ variables, endowed with the degree reverse lexicographic  (degrevlex, for short)  term order $\succ$ with $x_1\succ \dots\succ x_n$. In this setting, for every term $\tau:=x_1^{\alpha_1}\dots x_n^{\alpha_n}\not= 1$ we let $\deg(\tau):=\sum_{i=1}^n \alpha_i$ be its degree and we define $\min(\tau):=\min\{x_i \ \vert \ \alpha_i\not=0 \}$, which is the minimal variable that appears in $\tau$ with a non-null exponent. If $\min(\tau)=x_i$, we say that \lq\lq $\tau$ has minimal variable $x_i$\rq\rq.  Similarly, we define  $\max(\tau):=\max\{x_i \ \vert \ \alpha_i\not=0 \}$. Let $\mathbb T$ be the set of the terms of $R$. 

For every integer $t$, we denote by $R_t$ the $K$-vector space of homogeneous polynomials of $R$ of degree $t$, for every subset $\Gamma\subset R$, we let $\Gamma_t:=\Gamma\cap R_t$. 

If $I\subset R$ is a homogeneous ideal of $R$, we denote by $I_{\leq t}$ the ideal generated in $R$ by the homogeneous polynomials of $I$ of degree $\leq t$. Moreover, we denote by $\mathrm{reg}(I)$ the {\em regularity of $I$}, that is the minimal integer $m$ such that the $h$-th syzygy module of $I$ is generated in degrees $\leq m+h$, for every $h\geq 0$.

We refer to \cite{Stanley78,Valla} for definitions and results about Hilbert functions of standard graded $K$-algebras. When the $K$-algebra is $R/I$ for a homogeneous ideal $I\subseteq R$, we denote by $H_{R/I}$ its Hilbert function.
We define $\Delta^0 H_{R/I}(t):=H_{R/I}(t)$ and, for every $1\leq i\leq n$,  $\Delta^i H_{R/I}(0):=1$ and  $\Delta^i H_{R/I}(t):= \Delta^{i-1}H_{R/I}(t)-\Delta^{i-1}H_{R/I}(t-1)$ for $t>0$. We call $\Delta^iH(t)$ the {\em $i$-th derivative of $H$}. 

Given a monomial ideal $J\subset R$, we denote by $B_J$ the minimal monomial basis of $J$ and by $\mathcal N(J)$ the {\em sous-escalier} of $J$, that is the set  of terms of $R$ outside $J$. Recall that, for every integer $t$, the cardinality of $\mathcal N(J)_t$ coincides with the value of the Hilbert function of $R/J$ at the degree $t$. 

\begin{definition}
A monomial ideal $J\subseteq R$ is
\begin{description}
\item[{quasi-stable}] if for every $\tau \in J$ and for every $x_j\succ\min(\tau)$, there is some $s>0$ such that the term $x_j^s\tau/\min(\tau)$ belongs to $J$;
 \item[{stable}] if for every $x^\alpha \in J$ and $x_j\succ \min(\tau)$, the term $x_j\tau/\min(\tau)$ belongs to $J$;
 \item[{strongly stable}] if, for every term $\tau\in J$, variable $x_i$  by which $\tau$ is divisible  and variable $x_j\succ x_i$, the term ${x_j \tau}/{x_i}$ belongs to $J$.
\end{description}
\end{definition}

In the hypothesis that the ideal $J$ is stable, we have the following result, that will be used in Section \ref{sec:application}.
\begin{lemma}\label{lemma:varie}
Let $J\subset R$ be a stable ideal. Then 
$$\vert \mathcal N(J)\cap (J : x_n) \vert= \vert \{\tau \in B_J : \tau/x_n \in \mathbb T\}\vert.$$
\end{lemma}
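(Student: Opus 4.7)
The plan is to set up an explicit bijection between the two sets, namely
$$\phi:\ \mathcal{N}(J)\cap (J:x_n)\ \longrightarrow\ \{\tau\in B_J : \tau/x_n\in \mathbb{T}\},\qquad \phi(\sigma):=x_n\sigma,$$
with candidate inverse $\psi(\tau):=\tau/x_n$. Both maps are manifestly inverse to each other once they are well-defined, so the content of the lemma lies in verifying that they actually land in the claimed target sets.

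First, I would dispatch the easy direction. If $\tau\in B_J$ is divisible by $x_n$, then $\tau/x_n$ is a proper divisor of the minimal generator $\tau$, hence $\tau/x_n\notin J$, while $x_n\cdot(\tau/x_n)=\tau\in J$ gives $\tau/x_n\in (J:x_n)$. So $\psi$ is well-defined.

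The main step is to show that $\phi$ is well-defined, that is: whenever $\sigma\notin J$ but $x_n\sigma\in J$, the monomial $x_n\sigma$ is in fact a \emph{minimal} generator of $J$. This is where stability comes in. I would argue by contradiction: assume some $\mu\in B_J$ is a proper divisor of $x_n\sigma$. If $x_n\nmid \mu$, then $\mu$ already divides $\sigma$, contradicting $\sigma\notin J$. Otherwise write $\mu=x_n\mu'$; then $\mu'$ is a proper divisor of $\sigma$, so $\sigma=\mu'\nu$ for some term $\nu\neq 1$. Split on $\nu$: if some variable $x_j$ with $j<n$ divides $\nu$, then since $\min(\mu)=x_n$ and $x_j\succ x_n$, stability of $J$ gives $x_j\mu/x_n=x_j\mu'\in J$, and this term divides $\sigma$, again contradicting $\sigma\notin J$. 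Otherwise $\nu=x_n^k$ with $k\geq 1$, and then $\sigma=\mu\cdot x_n^{k-1}\in J$, a contradiction as well. So no such proper divisor $\mu$ exists, and $x_n\sigma\in B_J$.

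The main obstacle is precisely this case analysis for $\phi$: one must rule out proper divisors of $x_n\sigma$ lying in $J$, and the subtle subcase is when the \lq\lq excess factor\rq\rq $\nu=\sigma/\mu'$ involves a variable strictly larger than $x_n$. Stability is used exactly there, to transfer the generator $\mu$ with minimal variable $x_n$ onto a larger variable and produce an element of $J$ dividing $\sigma$. Once this step is in place, the bijection is immediate and the counting identity follows.
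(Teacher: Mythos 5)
Your proof is correct and takes essentially the same approach as the paper: the paper's proof consists of observing the very bijection $x^\beta \mapsto x_n x^\beta$ you construct, delegating its verification to a citation of \cite[Lemma 3(ii)]{Ber}. Your case analysis correctly supplies the details of that verification, with stability invoked exactly where it is needed.
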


\begin{proof}
It is enough to observe that $x^\beta \in \mathcal N(J)\cap (J:x_n)$ if and only if $x_nx^\beta \in B_J$ (for example, see \cite[Lemma 3(ii)]{Ber}).
\end{proof}

Let $J\subset R$ be any monomial ideal and $t$ an integer. Then, the {\em first expansion} of $\mathcal N(J)_t$ is $\mathcal E(\mathcal N(J)_t):=\mathbb T_{t+1}\setminus (\{x_1,\dots,x_n\}\cdot J_t)=\mathcal N(J_{\leq t})_{t+1}$ (see for instance \cite{MR2}).

If $J$ is a  stable  ideal,  for every integer $t$ the first expansion of $\mathcal N(J)_t$  can be directly computed without repetitions and in increasing order with respect to the reverse lexicographic order as follows, where in square brackets we denote a list of terms of $\mathbb T_t$ that is increasingly ordered with respect to $\succ$:
\begin{equation}\label{eq:espansione}
\mathcal E(\mathcal N(J)_{t})=\bigsqcup_{i=0}^{n-1} \ x_{n-i}\cdot [\tau \in \mathcal N(J)_t :   \min(\tau) \succeq x_{n-i}].
 \end{equation}
Thus, if $\ell$ is an integer such that $\mathcal N(J)_\ell\cap K[x_1,\dots,x_{n-1}]=\emptyset$ and $H$ is the Hilbert function of $R/J$, then for every $t\geq \ell$ we immediately obtain:
\begin{enumerate}[(i)]
\item $\mathcal N(J)_t\cap K[x_1,\dots,x_{n-1}]=\emptyset$,
\item $H(t)\geq H(t+1)$. 
\end{enumerate}


\begin{definition}\label{def:segment}
A subset $L\subset \mathbb T_t$ is a {\em reverse lexicographic segment} ({\em revlex segment}, for short) if, for every $\tau\in L$ and $\tau' \in \mathbb T_t$, $\tau'\succ \tau$ implies that $\tau'$ belongs to $L$.

\noindent A monomial ideal $J\subset R$ is a {\em reverse lexicographic ideal} ({\em revlex ideal, for short}) if $J_t\cap \mathbb T$ is a revlex segment, for every degree $t$. 
\end{definition}

\begin{remark}
There are several types of so-called {\em segments} (see  for instance \cite{CLMR2011}). Among them, a reverse lexicographic segment has a special place. 
For example, the generic initial ideal of general points with respect to the degree reverse lexicographic term order is a revlex ideal \cite{MR}. 
\end{remark}

\begin{definition} \label{def:revlex}\cite{Deery}
A monomial ideal $J\subset R$ is an {\em almost reverse lexicographic ideal} (or {\em weakly reverse lexicographic ideal} or {\em almost revlex ideal}, for short) if, for every minimal generator $\tau\in B_J$ of $J$  and  $\tau'\in \mathbb T_{\deg(\tau)}$, $\tau' \succ \tau$ implies that $\tau'$ belongs to $J$.
\end{definition}


\begin{example}\label{ex:almrev}
A revlex ideal is almost revlex, but an almost revlex ideal is not  in general revlex. 
The ideal $J=(x_2^3,x_2^2x_1,x_2x_1^2,x_1^3,x_3^2x_1^2)\subset K[x_1,\dots,x_4]$ is not a revlex ideal because $J_{4}\cap \mathbb T$ is not a revlex segment.  
\end{example}

\begin{remark} \label{rem:varie almost}  \cite[Definitions 8 and 10, and Remark 11]{HW2009}
\begin{enumerate}[(i)]
\item \label{it:varAlm_i} An almost revlex ideal is strongly stable. 
\item\label{it:varAlm_ii} If $J$ is an almost revlex ideal in the polynomial ring $K[x_1,\dots,x_{n-1}]\subset R$, then the ideal $JR$ is an almost revlex ideal too.
\end{enumerate}
\end{remark}

We say that a Hilbert function $H$ \emph{admits  an  almost revlex ideal} if there exists the almost revlex ideal $J$ such that $H$ is the Hilbert function of $R/J$. 
Indeed, 
if $J, J'\subset R$ are almost revlex ideals with the same Hilbert function, in the sense that $H_{R/J}(t)=H_{R/J'}(t)$ for all $t$, then $J=J'$ \cite[Remark 11]{HW2009}.


\section{Preliminaries on Hilbert functions of complete intersections}

The Hilbert function of a complete intersection is well known, as well as the minimal free resolution, which is a Koszul complex.
In this section, we collect some known properties of the Hilbert function of a complete intersection that have important consequences for our aims. 

We start with a very classical result that connects the Hilbert function of a $K$-algebra with the Hilbert function of a general hypersurface section. 

\begin{lemma}\label{lemma:solita seq}
If $I\subset R$ is a homogeneous ideal and $F\in R$ is a form of degree $d$ that is not a zero-divisor in $R/I$, then $H_{R/I}(t)-H_{R/I}(t-d)=H_{R/(I,F)}(t)$, for every $t$. 
\end{lemma}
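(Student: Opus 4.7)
The plan is to exploit the hypothesis that $F$ is a non zero-divisor on $R/I$ to exhibit a short exact sequence of graded $R$-modules and then read off the Hilbert functions.

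First I would introduce the multiplication-by-$F$ map
\[
\varphi \colon (R/I)(-d) \longrightarrow R/I, \qquad \overline{g} \longmapsto \overline{Fg},
\]
where $(R/I)(-d)$ denotes the standard degree shift so that $\varphi$ is a homogeneous homomorphism of degree $0$. The non zero-divisor hypothesis on $F$ means exactly that $\varphi$ is injective, and by definition its cokernel is $R/(I,F)$. Hence I would obtain the short exact sequence of graded $R$-modules
\[
0 \longrightarrow (R/I)(-d) \stackrel{\varphi}{\longrightarrow} R/I \longrightarrow R/(I,F) \longrightarrow 0.
\]

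Second, I would take the degree-$t$ graded piece, which preserves exactness since each term is a sum of finite-dimensional $K$-vector spaces, and apply the rank-nullity formula (equivalently, the additivity of dimension on exact sequences of finite-dimensional vector spaces). Since $\dim_K((R/I)(-d))_t = \dim_K (R/I)_{t-d} = H_{R/I}(t-d)$, this gives
\[
H_{R/I}(t) = H_{R/I}(t-d) + H_{R/(I,F)}(t),
\]
which rearranges to the claimed identity.

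There is no real obstacle here: the only subtle point is correctly identifying the shift so that $\varphi$ lands in degree $t$ when applied to a class of degree $t-d$, and verifying that the non zero-divisor assumption is precisely what makes $\varphi$ injective as a map of graded modules (as opposed to merely on some graded component). Both are formal once the exact sequence is set up, so the argument reduces to the standard additivity of Hilbert functions on short exact sequences.
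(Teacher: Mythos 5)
Your proposal is correct and follows essentially the same route as the paper: both exhibit the short exact sequence $0 \to (R/I)_{t-d} \xrightarrow{\cdot F} (R/I)_t \to (R/(I,F))_t \to 0$, using the non zero-divisor hypothesis for injectivity, and conclude by additivity of dimensions. Your version merely spells out the degree shift and the rank--nullity step that the paper leaves implicit.
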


\begin{proof} It is enough to apply the additive property of a Hilbert function on the short exact sequence \ 
$0 \longrightarrow \left({R}/{I}\right)_{t-d} {\buildrel {\cdot F} \over \longrightarrow } \ \left({R}/{I}\right)_{t} \longrightarrow \left({R}/{(I+(F))}\right)_{t} \longrightarrow 0$. 
\end{proof}

Let $d_1 \leq \dots \leq d_{n-1}\leq d_n$ be $n$ positive integers. We assume $d_1\geq 2$   because, if $d_1=1$, we can rephrase  the framework we are interested in using one less variable and forms of degrees $d_2\leq \cdots \leq d_n$.

For every $1\leq i \leq n$, let $H^{[i]}$ be the Hilbert function of a complete intersection generated by a regular sequence of $i$ forms of degrees $d_1\leq \dots\leq d_i$ in $K[x_1,\dots,x_i]$.
Moreover, for every $1\leq i\leq n$, we let $m_i:=(\sum_{j=1}^{i} d_j) -i$. 
Note that $m_i+1$ is the regularity of the ideal generated by a regular sequence of polynomials of degrees $d_1\leq \cdots\leq d_i$ in $K[x_1,\dots,x_i]$. 

\begin{theorem}\label{th:symmetry} \cite[Theorem (Hilbert Functions under Liaison)]{DGO}
The Hilbert function $H^{[i]}$ is symmetric and $\max\{t \ \vert \ H^{[i]}(t)\not=0\} =m_i$. 
\end{theorem}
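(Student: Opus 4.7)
The plan is to compute the Hilbert series of a complete intersection in closed form and then read off both the symmetry and the top degree from its palindromic shape. Fix a regular sequence $f_1,\dots,f_i$ of degrees $d_1,\dots,d_i$ in $S:=K[x_1,\dots,x_i]$. I would apply Lemma~\ref{lemma:solita seq} iteratively: at each step $k$ the form $f_k$ is a nonzero-divisor in $S/(f_1,\dots,f_{k-1})$ by definition of regular sequence, so the Hilbert series gets multiplied by $(1-z^{d_k})$. Starting from $HS_S(z)=(1-z)^{-i}$, this yields the closed form
\[
HS_{S/(f_1,\dots,f_i)}(z) \;=\; \frac{\prod_{j=1}^i(1-z^{d_j})}{(1-z)^i} \;=\; \prod_{j=1}^i\bigl(1+z+z^2+\cdots+z^{d_j-1}\bigr),
\]
which is in particular a polynomial in $z$, so $S/(f_1,\dots,f_i)$ is Artinian and $H^{[i]}$ has finite support.

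Each factor $P_j(z):=1+z+\cdots+z^{d_j-1}$ is palindromic of degree $d_j-1$, namely $z^{d_j-1}P_j(1/z)=P_j(z)$. A short induction on the number of factors, using $z^{a+b}P(1/z)Q(1/z)=(z^aP(1/z))(z^bQ(1/z))$, shows that a product of palindromic polynomials is palindromic with degree equal to the sum of the individual degrees. Hence $HS_{S/(f_1,\dots,f_i)}(z)$ is palindromic of degree $\sum_{j=1}^i(d_j-1)=m_i$.

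Palindromicity of degree $m_i$ is precisely the statement that the coefficient of $z^t$ equals the coefficient of $z^{m_i-t}$, that is, $H^{[i]}(t)=H^{[i]}(m_i-t)$ for every $t\geq 0$, which gives the symmetry. The leading coefficient of the product is $1$, so $H^{[i]}(m_i)=1$, while $H^{[i]}(t)=0$ for $t>m_i$ since the Hilbert series has degree $m_i$; this yields $\max\{t\mid H^{[i]}(t)\neq 0\}=m_i$. The proof is essentially computational once the Hilbert series formula is in place; the only mild point deserving attention is the closure of palindromic polynomials under multiplication, which poses no real obstacle.
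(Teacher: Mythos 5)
Your proof is correct. Note, though, that the paper does not actually prove this statement: it is quoted verbatim from Davis--Geramita--Orecchia, whose ``Hilbert Functions under Liaison'' theorem establishes symmetry in the far more general setting of arithmetically Gorenstein schemes and linked subschemes. Your argument takes a genuinely different and much more elementary route, exploiting what is special about complete intersections: iterating Lemma~\ref{lemma:solita seq} along the regular sequence turns the Hilbert series into the closed product $\prod_{j=1}^i(1+z+\cdots+z^{d_j-1})$, and symmetry plus the top degree $m_i=\sum_j(d_j-1)$ drop out of palindromicity of each factor. This buys a short, self-contained, characteristic-free proof (and, as a by-product, shows that $H^{[i]}$ depends only on the degrees $d_1,\dots,d_i$, which the paper's notation tacitly assumes); what it does not give you is the broader liaison statement, which is irrelevant here. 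All the individual steps --- the nonzero-divisor property at each stage of the regular sequence, the telescoping of the Hilbert series, and the closure of palindromic polynomials under multiplication with additive degrees --- are sound, so the argument stands as a complete replacement for the citation in the special case the paper needs.
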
 

\begin{proposition}\label{prop:inductive step}
For every $2\leq i\leq n$
, we have
\begin{equation*}\label{eq:relazione funzioni}
H^{[i]}(t)=\sum_{j=0}^t H^{[i-1]}(j)-\sum_{j=0}^{t-d_i} H^{[i-1]}(j).
\end{equation*}
In particular, $\Delta H^{[i]}(t) = H^{[i-1]}(t)-H^{[i-1]}(t-d_i)$.
\end{proposition}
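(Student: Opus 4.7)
The plan is to derive the formula from Lemma~\ref{lemma:solita seq} by building a regular sequence in two steps: first $i-1$ forms in $K[x_1,\dots,x_{i-1}]$ (whose quotient has Hilbert function $H^{[i-1]}$), and then one more form of degree $d_i$. Since the Hilbert function of a complete intersection depends only on the degrees and not on the specific forms, I may freely choose the regular sequence.

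Concretely, I would choose a regular sequence $f_1,\dots,f_{i-1}$ of degrees $d_1,\dots,d_{i-1}$ inside the smaller ring $R':=K[x_1,\dots,x_{i-1}]$, so that $H_{R'/(f_1,\dots,f_{i-1})}=H^{[i-1]}$ by definition. Viewing these same polynomials inside the larger ring $R'':=K[x_1,\dots,x_i]$, they still form a regular sequence because $R''$ is a free $R'$-module with homogeneous basis $1,x_i,x_i^2,\dots$, and consequently $R''/(f_1,\dots,f_{i-1})$ is a free module over $R'/(f_1,\dots,f_{i-1})$ with the same basis. Taking graded pieces of degree $t$ of this free module decomposition yields the key identity
\begin{equation*}
H_{R''/(f_1,\dots,f_{i-1})}(t)\;=\;\sum_{j=0}^{t} H^{[i-1]}(j).
\end{equation*}

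Next I would complete the regular sequence by choosing a form $f_i\in R''$ of degree $d_i$ that is not a zero-divisor modulo $(f_1,\dots,f_{i-1})$; the existence of such $f_i$ is guaranteed because the set of regular sequences of prescribed degrees is nonempty (and in fact generic). Applying Lemma~\ref{lemma:solita seq} to the ideal $I=(f_1,\dots,f_{i-1})\subset R''$ and the form $F=f_i$ gives
\begin{equation*}
H^{[i]}(t)=H_{R''/(f_1,\dots,f_i)}(t)=H_{R''/(f_1,\dots,f_{i-1})}(t)-H_{R''/(f_1,\dots,f_{i-1})}(t-d_i),
\end{equation*}
and substituting the identity above yields the claimed formula.

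Finally, the second assertion is an immediate consequence: expanding $\Delta H^{[i]}(t)=H^{[i]}(t)-H^{[i]}(t-1)$ using the formula just proved causes the two nested partial sums to telescope, leaving $H^{[i-1]}(t)-H^{[i-1]}(t-d_i)$. There is no real obstacle here; the only point that requires a moment's care is ensuring that the chosen $f_1,\dots,f_{i-1}$ (lifted from $R'$ to $R''$) remain a regular sequence in the larger ring, which is handled by the freeness argument above.
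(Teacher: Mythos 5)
Your argument is correct and is essentially the paper's own proof, which simply says to apply Lemma~\ref{lemma:solita seq} to regular sequences; you have just filled in the details. The only cosmetic difference is that you justify the identity $H_{R''/(f_1,\dots,f_{i-1})}(t)=\sum_{j=0}^{t}H^{[i-1]}(j)$ via freeness of $R''$ over $R'$, whereas one could equally get it from a second application of Lemma~\ref{lemma:solita seq} with $F=x_i$; both are routine and lead to the same computation.
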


\begin{proof}
It is enough to apply Lemma \ref{lemma:solita seq} to regular sequences.
\end{proof}

The following result is a weaker version of \cite[Theorem 1 and Corollary 2]{RRR}, where the behavior of the Hilbert function of a complete intersection is precisely described. Here, we only recall the properties we need.  
We define $\bar u_1:=0$ and, for every $i>1$, $\bar u_i:=\min\left\{\lfloor \frac{m_i}{2}\rfloor,m_{i-1}\right\}$.

\begin{theorem}\label{th:functions} \cite[Theorem 1 and Corollary 2]{RRR}
The Hilbert function $H^{[i]}$ is strictly increasing in the range $[0,\bar u_i]$ and is decreasing in the range $[\bar u_{i},m_{i}]$. 
\end{theorem}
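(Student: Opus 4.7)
The plan is to prove both statements simultaneously by induction on $i$. The base case $i=1$ is immediate: $H^{[1]}$ equals $1$ on $[0, m_1]$ and vanishes elsewhere, $\bar u_1 = 0$, the strict-increase range is empty, and the decrease range carries a constant function. For the inductive step I would combine Proposition \ref{prop:inductive step}, which gives $\Delta H^{[i]}(t) = H^{[i-1]}(t) - H^{[i-1]}(t - d_i)$, with the symmetry $H^{[i-1]}(s) = H^{[i-1]}(m_{i-1} - s)$ from Theorem \ref{th:symmetry} and the inductive hypothesis applied to $H^{[i-1]}$.

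Together, symmetry and the inductive hypothesis make $H^{[i-1]}$ a unimodal function of $|s - m_{i-1}/2|$ on $[0, m_{i-1}]$: strictly increasing on $[0, \bar u_{i-1}]$, strictly decreasing on $[m_{i-1} - \bar u_{i-1}, m_{i-1}]$, and constant on the plateau $[\bar u_{i-1},\, m_{i-1} - \bar u_{i-1}]$. It follows that, for $0 \le a < b \le m_{i-1}$ not both lying on the plateau, $H^{[i-1]}(a) < H^{[i-1]}(b)$ if and only if $b$ is strictly closer than $a$ to $m_{i-1}/2$, which is equivalent to $a + b < m_{i-1}$. Substituting $a = t - d_i$ and $b = t$ translates this into $2t - d_i < m_{i-1}$, i.e.\ $t \le \lfloor m_i/2 \rfloor$.

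Next I would check $\Delta H^{[i]}(t) > 0$ for $1 \le t \le \bar u_i = \min\{\lfloor m_i/2 \rfloor,\, m_{i-1}\}$. If $t < d_i$, then $H^{[i-1]}(t - d_i) = 0$ while $H^{[i-1]}(t) > 0$ because $t \le m_{i-1}$. If $t \ge d_i$, both $t$ and $t - d_i$ lie in $[0, m_{i-1}]$ and the criterion above yields the strict inequality, modulo the plateau question (addressed below). For $t > \bar u_i$ one shows $\Delta H^{[i]}(t) \le 0$: either $t > m_{i-1}$, whence $H^{[i-1]}(t) = 0 \le H^{[i-1]}(t - d_i)$, or $\lfloor m_i/2 \rfloor < t \le m_{i-1}$, in which case $t - d_i$ is at least as close to $m_{i-1}/2$ as $t$ and unimodality gives $H^{[i-1]}(t) \le H^{[i-1]}(t - d_i)$.

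The main obstacle is the plateau case in the strict-increase step: one must rule out that both $t$ and $t - d_i$ lie in $[\bar u_{i-1},\, m_{i-1} - \bar u_{i-1}]$. If $\bar u_{i-1} = \lfloor m_{i-1}/2 \rfloor$, the plateau has length at most one and the hypothesis $d_i \ge d_1 \ge 2$ is enough. If $\bar u_{i-1} = m_{i-2}$ (possible only for $i \ge 3$), simultaneous containment would force $d_i + m_{i-2} \le d_{i-1} - 1$, contradicting $d_i \ge d_{i-1}$ and $m_{i-2} \ge 0$. For $i = 2$ the plateau is the whole support $[0, d_1 - 1]$, and containment would require $d_2 \le d_1 - 1$, contradicting $d_1 \le d_2$. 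These checks are where the ordering $d_1 \le \dots \le d_n$ and the hypothesis $d_1 \ge 2$ enter essentially.
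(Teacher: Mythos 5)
The paper does not actually prove this statement: Theorem \ref{th:functions} is imported verbatim from \cite[Theorem 1 and Corollary 2]{RRR} (``Here, we only recall the properties we need''), so there is no internal argument to compare against. Your induction is a correct, self-contained derivation from the only two facts the paper does record, namely the symmetry of $H^{[i-1]}$ (Theorem \ref{th:symmetry}) and the difference formula $\Delta H^{[i]}(t)=H^{[i-1]}(t)-H^{[i-1]}(t-d_i)$ (Proposition \ref{prop:inductive step}). The reduction of the sign of $\Delta H^{[i]}(t)$ to comparing the distances of $t$ and $t-d_i$ from $m_{i-1}/2$, i.e.\ to the inequality $2t-d_i<m_{i-1}$, is exactly what makes $\lfloor m_i/2\rfloor$ appear, and your plateau analysis correctly isolates the only place where strictness could fail; the three cases ($\bar u_{i-1}=\lfloor m_{i-1}/2\rfloor$, $\bar u_{i-1}=m_{i-2}$ with $i\ge 3$, and $i=2$) are exhaustive and each is ruled out by $d_1\ge 2$ or by $d_{i-1}\le d_i$, which is indeed where those hypotheses enter. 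Two points are left implicit but are easy to supply: in the decreasing step with $\lfloor m_i/2\rfloor<t\le m_{i-1}$ you should note that $2t\ge m_{i-1}+d_i$ forces $t-d_i\ge m_{i-1}-t\ge 0$, so the unimodality comparison really does take place inside $[0,m_{i-1}]$; and the positivity of $H^{[i-1]}$ on all of $[0,m_{i-1}]$ (used when $t<d_i$) follows from the inductive shape together with $H^{[i-1]}(m_{i-1})=H^{[i-1]}(0)=1$. Compared with simply citing \cite{RRR}, your argument makes the paper self-contained at the cost of not recovering the finer information in \cite{RRR} (the exact strictly-increasing/constant/strictly-decreasing ranges), of which the stated theorem is explicitly only the weak version the authors need.
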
 
%

For a Hilbert function $H$, let $\delta$ be the Krull dimension of a graded $K$-algebra having Hilbert function $H$. Then, for every $s\geq \delta$, we let
$$c_s(H):=\max\{ c \ \vert \ \Delta^s H(j) >0, \ \forall \ 0\leq j \leq c\} \text{ (see \cite[Theorem 5]{Pardue}).}$$
We write $c_s$ if it is clear from the context which Hilbert function is involved. If $H=H^{[i]}$, we let $c^{[i]}_s := c_s(H^{[i]})$.

In Theorem \ref{th:functions} a decreasing behavior of the Hilbert function of a complete intersection is described. The following relevant result, which is due to Pardue, highlights that also the derivatives of such a function have a decreasing behavior.

\begin{theorem}\label{th:decrescita} \cite[Theorem 5]{Pardue}
 If $0\leq t \leq c^{[n]}_s$ and $\Delta^{s+1}H^{[n]}(t)\leq 0$, then we also have $\Delta^{s+1}H^{[n]}(t+1)\leq 0$.
\end{theorem}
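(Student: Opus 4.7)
The plan is to proceed by induction on $n$, using the recursive relation from Proposition \ref{prop:inductive step} in the form
$$\Delta^{s+1} H^{[n]}(t) = F(t) - F(t - d_n), \qquad F(t) := \Delta^{s} H^{[n-1]}(t).$$
For the base case $n=1$, the function $H^{[1]}$ equals $1$ on $[0, d_1-1]$ and vanishes beyond, so a direct check shows that $c_s^{[1]} = 0$ for every $s \geq 1$ (making the claim vacuous), while for $s = 0$ it reduces to the trivial unimodality of $H^{[1]}$.

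For the inductive step, I would apply the inductive hypothesis to $H^{[n-1]}$ with the same index $s$, which (via its equivalent reformulation) asserts that $F = \Delta^s H^{[n-1]}$ is unimodal on $[0, c_s^{[n-1]}]$, strictly increasing on $[0, u]$ and non-increasing on $[u, c_s^{[n-1]}]$ for some peak $u$. The desired conclusion then amounts to showing that $G(t) := F(t) - F(t-d_n) = \Delta^{s+1} H^{[n]}(t)$ remains non-positive once it first becomes so, on the range $[0, c_s^{[n]}]$. I would establish this via a three-case analysis on the position of $t$ relative to $u$ and $d_n$: if $t \leq u$, both $F(t)$ and $F(t-d_n)$ lie in the strictly increasing range of $F$, so $G(t) > 0$ and the hypothesis cannot hold; if $u < t \leq u + d_n$, then as $t$ increases $F(t)$ is non-increasing while $F(t-d_n)$ is non-decreasing, so $G$ is monotonically non-increasing, and hence once $G(t) \leq 0$ also $G(t+1) \leq 0$; finally, if $u + d_n < t$ and both arguments still lie in $[0, c_s^{[n-1]}]$, both $F(t)$ and $F(t-d_n)$ belong to the non-increasing part of $F$, so $G(t) \leq 0$ automatically.

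The main obstacle I expect concerns the range $c_s^{[n-1]} < t \leq c_s^{[n]}$, in which $F(t)$ is no longer directly controlled by the inductive unimodality. To close this gap, the plan is to exploit the symmetry relation $\Delta^s H^{[n-1]}(t) = (-1)^s \Delta^s H^{[n-1]}(m_{n-1} + s - t)$, obtained by iterating the reflection of Theorem \ref{th:symmetry}. This reflects $F$ onto its inductively understood lower half and, combined with an analogous bound on $c_s^{[n]}$ derived from the symmetry of $\Delta^s H^{[n]}$ together with Theorem \ref{th:functions}, confines the problematic values of $t$ to a region where a reflected version of the three-case analysis above applies, completing the induction.
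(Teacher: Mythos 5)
First, a remark on the comparison itself: the paper does not prove Theorem \ref{th:decrescita} at all; it is quoted from \cite[Theorem 5]{Pardue} and used as a black box (notably inside the proof of Theorem \ref{th:main}). So there is no internal argument to measure you against, and your attempt must stand on its own.

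Your skeleton --- induction on $n$ via $\Delta^{s+1}H^{[n]}(t)=F(t)-F(t-d_n)$ with $F=\Delta^{s}H^{[n-1]}$, plus a monotonicity analysis of $F$ --- is a sensible strategy, and the base case is fine. The gap is in the inductive step, and it is exactly where you suspect it is, but your proposed repair does not close it. The inductive hypothesis only yields that $F$ is strictly increasing and then non-increasing on $[0,c_s^{[n-1]}+1]$, whereas the statement for $H^{[n]}$ requires evaluating $F$ at $t$ and $t+1$ for all $t\le c_s^{[n]}$; since $\Delta^sH^{[n]}(t)=\sum_{j=t-d_n+1}^{t}F(j)$ one sees $c_s^{[n]}\ge c_s^{[n-1]}$, usually strictly, so these arguments routinely leave the controlled range. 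Outside it $F$ genuinely fails to be non-increasing: in the paper's Example \ref{ex:primo}, $\Delta H^{[4]}$ takes the values $\dots,-16,-11,\dots$ at $t=13,14$. The symmetry $\Delta^sH^{[n-1]}(t)=(-1)^s\Delta^sH^{[n-1]}(m_{n-1}+s-t)$ is correct but does not by itself finish the job: (i) for odd $s$ the sign flip reverses the monotonicity of the reflected piece, so the ``reflected three-case analysis'' is a different analysis that you have not carried out; (ii) when $d_{n-1}>m_{n-2}+1$ there is a middle region between $c_s^{[n-1]}$ and its mirror image reached by neither the induction nor the reflection, and controlling $F$ there needs a separate input (e.g.\ the constancy of $H^{[n-1]}$ on $[m_{n-2},d_{n-1}-1]$); (iii) most importantly, you never verify that for $t\le c_s^{[n]}$ the four arguments $t-d_n,\,t+1-d_n,\,t,\,t+1$ cannot be distributed among the increasing, non-increasing and re-increasing segments of $F$ in a configuration with $G(t)\le 0$ but $G(t+1)>0$ (e.g.\ $F(t+1)$ in the re-increasing negative tail while $F(t+1-d_n)$ is more negative). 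Ruling this out requires a quantitative comparison of $c_s^{[n]}$ with the turning points of $F$, something like $c_s^{[n]}+1\le m_{n-1}+s-u$ where $u$ is the peak of $F$; in the example above this holds with equality ($c_1^{[4]}+1=11=m_3+1-3$), so there is no slack and the inequality must actually be proved, not assumed. Until these points are supplied, the induction does not close.
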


\begin{example}\label{ex:primo}
Let $n=4$. For $d_1=4$, $d_2=5$, $d_3=7$, $d_4=8$, we obtain $m_1=3$, $m_2=7$, $m_3=13$, $m_4=20$, $\bar u_2=3$, $\bar u_3=6$, $\bar u_4=10$ and 

\begin{small}
\begin{tabular}{l| rrrrrrrrrrrrrrrr }
$t$   & $0$ & $1$ & $2$ & $3$ & $4$ & $5$ & $6$ & $7$ & $8$ & $9$ & $10$ & $11$ & $12$ & $13$ & $14$ & $15$ \\
\hline
$H^{[4]}(t)$  & $1$ &$4$ &$10$ &$20$ &$34$ &$51$ &$70$ &$89$ &$105$ &$116$ &$120$ &$116$ &$105$ &$89$ &$70$  & \dots\\
$H^{[3]}(t)$ & $1$ & $3$ & $6$ & $10$ & $14$ & $17$ & $19$ & $19$ & $17$ & $14$ & $10$ & $6$ & $3$ & $1$ & $0$ & \dots \\
$\Sigma_{j=0}^t H^{[3]}(j)$ & $1$ & $4$ & $10$ & $20$ & $34$ & $51$ & $70$ & $89$ & $106$ & $120$ & $130$ & $136$ & $139$ & $140$ & $140$ &\dots \\
$\Delta H^{[4]}(t)$ & $1$ & $3$ & $6$ & $10$ & $14$ & $17$ & $19$ & $19$ & $16$ & $11$ & $4$ & $-4$ & $-9$ & $-16$ & $-11$ & \dots \\
\end{tabular}
\end{small}\\
Observe that $H^{[4]}(t)=\sum_{j=0}^t H^{[3]}(t)$ for every $t<d_4$. Furthermore in this case $c_0^{[4]}= m_4=20$, $c_1^{[4]}=10$, $c_2^{[4]}=6$.
\end{example}


\section{Almost revlex ideals: reduction numbers and minimal generators}\label{sec:almostrevlex}

Using \cite[Corollary 1.4]{HoaTrung}, we can consider the following definition for reduction numbers  for  strongly stable ideals (about  the more general definition and properties of reduction numbers see \cite{HoaTrung} and the references therein).

\begin{definition}
Let $J\subset R=K[x_1,\dots,x_n]$ be a strongly stable ideal  and $\delta$ the Krull dimension of $R/J$. For any $s\geq \delta
$, we denote by $r_{s}(R/J)$ the {\em $s$-reduction number} of $R/J$, that is  $\min\{t \ \vert \ x_{n-s}^{t+1}\in J \}$. If from the context it is clear what strongly stable ideal is involved, we write $r_s$ only. 
\end{definition}

\begin{remark}\label{rem:zerodivisori}
Let $J\subset R$ be a strongly stable ideal and $\delta$ the Krull dimension of $R/J$.  
\begin{enumerate}[(i)]
\item\label{it:zerodiv_i} $r_s\leq r_{s-1}$, for every $s> \delta$.
\item\label{it:zerodiv_ii} If $J$ is also almost revlex, for every $n-\delta+2 \leq j\leq n$ the variable $x_j$ is not a zero-divisor on $R/J$. 
\end{enumerate}
\end{remark}

In this section, we highlight some results about reduction numbers for an almost revlex ideal  $J$ that can be deduced   from the combinatorial structure of $J$.  Moreover, we deduce a closed formula for the number of minimal generators.

\begin{lemma} \label{lemma:deltaH}
Let $J\subset R$ be an almost revlex ideal, $\delta$ the Krull dimension   and $H$ the Hilbert function of $R/J$. For every $s>\delta$, the Hilbert function of $R/(J + (x_{n-s+1},\dots,x_n))$ coincides with $\Delta^s H(t)$ at every $t\leq r_s$. 
\end{lemma}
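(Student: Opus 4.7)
The plan is to prove the statement by iterating the short exact sequence from Lemma~\ref{lemma:solita seq}. Set $I_0 := J$ and $I_j := J + (x_{n-j+1},\dots,x_n)$ for $1 \le j \le s$, so that $I_s$ is the ideal appearing in the statement. I would prove by induction on $j$ that
\[
H_{R/I_j}(t) = \Delta^j H(t) \quad \text{for every } t \le r_s \text{ and every } 0 \le j \le s;
\]
the base case $j=0$ is tautological, and specializing at $j=s$ gives the claim.

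For the inductive step, I would use the standard four-term exact sequence
\[
0 \longrightarrow K_{t-1} \longrightarrow (R/I_j)_{t-1} \stackrel{\cdot x_{n-j}}{\longrightarrow} (R/I_j)_t \longrightarrow (R/I_{j+1})_t \longrightarrow 0,
\]
where $K_{t-1}$ is the degree-$(t-1)$ part of the annihilator of $x_{n-j}$ modulo $I_j$. If $K_{t-1}=0$, then additivity of the Hilbert function combined with the inductive hypothesis applied in degrees $t$ and $t-1$ yields $H_{R/I_{j+1}}(t) = \Delta H_{R/I_j}(t) = \Delta^{j+1} H(t)$, exactly as required. Hence the heart of the proof reduces to the injectivity claim: for every $0 \le j \le s-1$ and every $t \le r_s$, no monomial $\tau \in \mathcal N(I_j)_{t-1}$ satisfies $x_{n-j}\tau \in I_j$.

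To prove injectivity, I would argue by contradiction. Since $\mathcal N(I_j)\subseteq K[x_1,\dots,x_{n-j}]$, the condition $x_{n-j}\tau \in I_j$ forces $x_{n-j}\tau \in J$. Choosing a minimal generator $\sigma \in B_J$ that divides $x_{n-j}\tau$ and using $\tau \notin J$, one obtains $x_{n-j} \mid \sigma$, and moreover $\sigma \in K[x_1,\dots,x_{n-j}]$ with $\min(\sigma)=x_{n-j}$. Since $j \le s-1$ implies $n-j > n-s$, the pure power $x_{n-s}^{\deg(\sigma)}$ is strictly greater than $\sigma$ in the revlex order: the exponent-vector difference vanishes at all indices strictly larger than $n-j$ and equals the negative quantity $-a_{n-j}$ at coordinate $n-j$. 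The almost revlex property of $J$ (Definition~\ref{def:revlex}) then forces $x_{n-s}^{\deg(\sigma)} \in J$, whence $\deg(\sigma) \ge r_s+1$ by the very definition of $r_s$. But $\deg(\sigma) \le \deg(x_{n-j}\tau) = t \le r_s$, which is a contradiction.

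I expect the most delicate point to be precisely this revlex comparison between $x_{n-s}^{\deg(\sigma)}$ and $\sigma$: it is essential that $\sigma$ lies in $K[x_1,\dots,x_{n-j}]$ (so that no coordinate of index larger than $n-j$ intervenes) and that $x_{n-j}\mid\sigma$ (so that the sign at coordinate $n-j$ is strictly negative). The inequality $n-j > n-s$, available because $j$ is strictly less than $s$ at the step where we add $x_{n-j}$, is what lets the definition of $r_s$ (which involves $x_{n-s}$, not $x_{n-j}$) enter the argument. Once this comparison is in hand, everything else is routine bookkeeping on the four-term exact sequence.
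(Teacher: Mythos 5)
Your proof is correct and follows essentially the same route as the paper's: both reduce the claim to the injectivity of multiplication by the variable being quotiented out in all degrees at most $r_s$, and both obtain that injectivity from the almost revlex property, which would otherwise force a pure power of a larger variable of degree at most $r_s$ into $J$, contradicting the definition of the reduction number. The only differences are organizational: you run a single induction on the number of variables added, always compare against $x_{n-s}^{\deg(\sigma)}$, and treat the cases $\delta=0$ and $\delta>0$ uniformly, whereas the paper inducts on $s$ and handles $\delta>0$ by a preliminary reduction via Remark~\ref{rem:zerodivisori}.
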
 

\begin{proof}
We first consider the Artinian case $\delta=0$. 
Recall that $r_0\geq r_{1}\geq \dots \geq r_{s-1} \geq r_{s}$. We argue by induction on $s$. 

Let $s=1$ and consider $A=R/J$. Suppose that, for some integer $t\leq r_{1}$, the term $x_{n}\tau$ belongs to $J_t$, while $x_{n}$ and $\tau$ do not belong to $J$. Since $J$ is strongly stable, this means that $x_{n}\tau$ is a minimal monomial generator of $J$. Thanks to Definition \ref{def:revlex}, this would imply that $x_{n-1}^t\in J$, because $x_{n-1}^t\succ x_{n}\tau$ for every $\tau\in \mathbb T_{t-1}$, in contradiction with the definition of $r_{1}$.
This means that the variable $x_{n}$ is not a zero-divisor on $A_t$, for every $t\leq r_{1}$. Hence, we can conclude by Lemma \ref{lemma:solita seq} because the short sequence \ 
$0 \longrightarrow A_{t-1} {\buildrel {\cdot x_{n}} \over \longrightarrow } \ A_{t} \longrightarrow \left({A}/{(x_n)}\right)_{t} \longrightarrow 0$ \ is exact for every $t\leq r_{1}$.

For every $s>1$, we apply the same argument to $A={R}/{(J+(x_{n-s+2},\dots,x_n))}$ observing that the variable $x_{n-s+1}$ is not a zero-divisor on $A_j$, for every $j\leq r_s\leq r_{s-1}$. 

Let us now pass to the case $\delta>0$. If $\delta=1$ then consider $A=R/J$, if $\delta>1$ then consider $A=R/(J+(x_{n-\delta+2},\dots,x_{n}))$. In both cases $A$ has Hilbert function $\Delta^{\delta-1}H$ thanks to Remark \ref{rem:zerodivisori}\eqref{it:zerodiv_ii} and Lemma \ref{lemma:solita seq}. Thus, we can proceed like in the Artinian case.
\end{proof}

\begin{lemma}\label{lemma:ri<ci}
Let $J\subset R$ be an almost revlex ideal, $\delta$ the Krull dimension and $H$ the Hilbert function of $R/J$. If $r_s<r_{s-1}$ for some $s> \delta$, then $\Delta^{s}H(t) \leq 0$ for every $r_{s} < t \leq r_{s-1}$. 
\end{lemma}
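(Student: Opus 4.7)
The plan is to rewrite $\Delta^s H(t)$, in the range $r_s < t \leq r_{s-1}$, as a first difference of the Hilbert function of an auxiliary quotient. Set
\[B := R/\bigl(J + (x_{n-s+2},\dots,x_n)\bigr).\]
Applying Lemma \ref{lemma:deltaH} at index $s-1$ (or, in the edge case $s = \delta+1$, using Remark \ref{rem:zerodivisori}\eqref{it:zerodiv_ii} together with Lemma \ref{lemma:solita seq} to kill the $\delta-1$ non-zero-divisors $x_{n-\delta+2},\dots,x_n$), one obtains $H_B(t) = \Delta^{s-1}H(t)$ for every $t \leq r_{s-1}$. Since both $t$ and $t-1$ lie in that range whenever $r_s < t \leq r_{s-1}$, one gets
\[\Delta^s H(t) = \Delta^{s-1}H(t) - \Delta^{s-1}H(t-1) = H_B(t) - H_B(t-1),\]
so it remains to show $H_B(t) \leq H_B(t-1)$ for every $t > r_s$.

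Next, I would identify $B$ with $K[x_1,\dots,x_{n-s+1}]/J'$, where $J' := J \cap K[x_1,\dots,x_{n-s+1}]$; the ideal $J'$ is strongly stable because $J$ is (Remark \ref{rem:varie almost}\eqref{it:varAlm_i}). The key claim is that for every $t > r_s$ no term of $\mathcal N(J')_t$ lies in $K[x_1,\dots,x_{n-s}]$. Indeed, by the definition of $r_s$ we have $x_{n-s}^{r_s+1} \in J$, so $x_{n-s}^{t} \in J$ for every $t \geq r_s+1$, and iterated strong stability --- swapping each $x_{n-s}$ for any larger variable --- shows that every monomial in $K[x_1,\dots,x_{n-s}]$ of degree $t$ already belongs to $J$, hence to $J'$. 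Consequently, every term of $\mathcal N(J')_t$ must be divisible by $x_{n-s+1}$.

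The map $\sigma \mapsto \sigma/x_{n-s+1}$ then yields an injection $\mathcal N(J')_t \hookrightarrow \mathcal N(J')_{t-1}$ (if $\sigma/x_{n-s+1}$ were in $J'$, so would $\sigma$ be), giving $H_B(t) \leq H_B(t-1)$ and closing the proof. The main, though mild, obstacle is the index bookkeeping around the edge case $s = \delta+1$, where Lemma \ref{lemma:deltaH} is not directly applicable at index $s-1 = \delta$ and must be supplemented by the non-zero-divisor property of $x_{n-\delta+2},\dots,x_n$; the structural core, namely that beyond $r_s$ every sous-escalier term must carry an $x_{n-s+1}$, is an immediate consequence of strong stability.
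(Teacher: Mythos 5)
Your proposal is correct and follows essentially the same route as the paper: both pass to the auxiliary quotient $A=R/(J+(x_{n-s+2},\dots,x_n))$, identify its Hilbert function with $\Delta^{s-1}H$ in degrees $\leq r_{s-1}$ via Lemma \ref{lemma:deltaH}, and then show that this function is non-increasing beyond $r_s$ --- your division-by-$x_{n-s+1}$ injection on the sous-escalier is exactly the combinatorial shadow of the paper's short exact sequence, whose exactness on the right comes from the vanishing of $(A/(x_{n-s+1}))_t$ for $t>r_s$. One bookkeeping caveat: in the edge case $s=\delta+1$ the ring $B$ mods out the $\delta$ variables $x_{n-\delta+1},\dots,x_n$, while Remark \ref{rem:zerodivisori}\eqref{it:zerodiv_ii} only covers the top $\delta-1$ of them and lands you at $\Delta^{\delta-1}H$, so you still owe the fact that $x_{n-\delta+1}$ is a non-zero-divisor in degrees $\leq r_\delta$ on $R/(J+(x_{n-\delta+2},\dots,x_n))$; this is precisely the almost-revlex minimal-generator argument already carried out inside the proof of Lemma \ref{lemma:deltaH}, so the gap is one of citation rather than substance.
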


\begin{proof}
If $\delta=0$ and $s=1$, consider $A={K[x_1,\dots,x_n]}/{J}$ and observe that, for every $t>r_1$, $\left({K[x_1,\dots,x_n]}/{(J+(x_n))}\right)_{t}$ vanishes. So, we have the short exact sequence
$$0 \longrightarrow \left({K[x_1,\dots,x_n]}/{J:(x_n)}\right)_{t-1} \longrightarrow A_{t-1} {\buildrel {\cdot x_n} \over \longrightarrow } \ A_{t}  \longrightarrow 0$$ 
and obtain the thesis for every $r_1 < t \leq r_0$.
If $\delta=0$ and $s>1$, we can apply the same argument to $A={K[x_1,\dots,x_n]}/{(J+(x_{n-s+2},\dots,x_n))}$, which has Hilbert function $\Delta^{s-1}H(t)$ for every $t\leq r_{s-1}$ by Lemma \ref{lemma:deltaH}, because for every $t>r_s$ the quotient  $\left({K[x_1,\dots,x_n]}/{(J+(x_{n-s+1},\dots,x_n))}\right)_t$ vanishes.
If $\delta>0$ we argue in the same way considering: $A=R/J$, if $\delta=1$ and $s=2$; $A=K[x_1,\dots,x_n]/(J+x_{n-\delta+2},\dots,x_n)$, which has Hilbert function $\Delta^{\delta-1}H$ thanks to Remark \ref{rem:zerodivisori}\eqref{it:zerodiv_ii} and Lemma \ref{lemma:solita seq}, if $\delta>1$ and $s=\delta+1$; $A=K[x_1,\dots,x_n]/(J+(x_{n-s+2},\dots,x_n))$, otherwise.
\end{proof}

\begin{remark}
At a first glance, the result of Lemma \ref{lemma:ri<ci} could seem similar to that of Theorem \ref{th:decrescita}, but the context and the aim are different. Statement of Theorem \ref{th:decrescita} (and its proof in \cite{Pardue}) considers the Hilbert function of a complete intersection and describes the decreasing behavior of its derivatives, without assuming that this function admits an almost revlex ideal. Statement of Lemma \ref{lemma:ri<ci} considers an almost revlex ideal  and relates the reduction numbers of the almost revlex ideals to the integers $c_s(H)$.
\end{remark}

\begin{proposition}\label{prop:r=c}
Let $J\subset R$ be an almost revlex ideal, $\delta$ the Krull dimension and $H$ the Hilbert function of $R/J$. For every $s\geq \delta$, we have $r_s =c_s$.
\end{proposition}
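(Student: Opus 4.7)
The plan is to prove both inequalities $c_s \ge r_s$ and $c_s \le r_s$ for every $s \ge \delta$, the latter by induction on $s$ starting from $s = \delta$. Throughout the argument, I would work with the quotient $A_s := R/(J + (x_{n-s+1}, \dots, x_n))$ (with the convention $A_0 := R/J$).

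For $c_s \ge r_s$, i.e.\ $\Delta^s H(t) > 0$ for every $0 \le t \le r_s$, Lemma \ref{lemma:deltaH} (extended to the boundary case $s = \delta$ by the same almost-revlex reasoning used in its proof: if $x_{n-s+1}\bar\tau$ were in $J$ with $\bar\tau$ outside, strong stability would make it a minimal generator and the almost-revlex property would force $x_{n-s}^{t}\in J$, contradicting $t\le r_s$) gives $H_{A_s}(t) = \Delta^s H(t)$ throughout this range. The term $x_{n-s}^t$ lies outside $J$ by the very definition of $r_s$, hence sits in the sous-escalier of $A_s$, so $H_{A_s}(t) \ge 1$, which yields the positivity.

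For $c_s \le r_s$, i.e.\ $\Delta^s H(r_s+1) \le 0$, I would first observe that $x_{n-s}^{r_s+1} \in J$ together with strong stability force every monomial of degree $r_s+1$ in $K[x_1, \dots, x_{n-s}]$ into $J$, giving $H_{A_s}(r_s+1) = 0$. In the base case $s = \delta$ (with $\delta = 0$ being immediate), I would apply the short exact sequence
\begin{equation*}
0 \to (0 :_{A_{\delta-1}} x_{n-\delta+1})(-1) \to A_{\delta-1}(-1) \xrightarrow{\,\cdot\, x_{n-\delta+1}} A_{\delta-1} \to A_\delta \to 0,
\end{equation*}
recalling (as in the proof of Lemma \ref{lemma:deltaH}) that $H_{A_{\delta-1}} = \Delta^{\delta-1}H$ in every degree by Remark \ref{rem:zerodivisori}\eqref{it:zerodiv_ii} and Lemma \ref{lemma:solita seq}. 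Evaluating the resulting identity $\Delta^\delta H(t) = H_{A_\delta}(t) - H_{(0\colon x_{n-\delta+1})_{A_{\delta-1}}}(t-1)$ at $t = r_\delta+1$ gives $\Delta^\delta H(r_\delta+1) \le 0$. For the inductive step $s > \delta$, two subcases arise: if $r_s < r_{s-1}$, Lemma \ref{lemma:ri<ci} applies directly; if $r_s = r_{s-1}$, the inductive hypothesis $c_{s-1} = r_{s-1}$ supplies both $\Delta^{s-1}H(r_s + 1) \le 0$ and $\Delta^{s-1}H(r_s) > 0$, hence
\begin{equation*}
\Delta^s H(r_s+1) \;=\; \Delta^{s-1}H(r_s+1) - \Delta^{s-1}H(r_s) \;<\; 0.
\end{equation*}

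The main obstacle is precisely the subcase $r_s = r_{s-1}$ in the inductive step, where Lemma \ref{lemma:ri<ci} no longer covers the critical degree $r_s+1 = r_{s-1}+1$, and one must carry the induction with \emph{both} the positivity on $[0,r_{s-1}]$ and the non-positivity at $r_{s-1}+1$ simultaneously. A secondary technical point is ensuring that the base case $s = \delta \ge 1$ really does produce an $A_{\delta-1}$ with Hilbert function $\Delta^{\delta-1}H$ in all degrees, which relies on iterating Remark \ref{rem:zerodivisori}\eqref{it:zerodiv_ii} down to the almost-revlex ideal $J\cap K[x_1,\dots,x_{n-1}]$ of lower Krull dimension.
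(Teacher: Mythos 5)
Your proof is correct and follows essentially the same path as the paper's: $r_s \le c_s$ comes from Lemma \ref{lemma:deltaH} together with the witness $x_{n-s}^t \in \mathcal N(J)$, and the reverse inequality is proved by induction on $s$ with the same two subcases in the inductive step ($r_s<r_{s-1}$ via Lemma \ref{lemma:ri<ci}, and $r_s=r_{s-1}$ via the telescoping identity $\Delta^s H(r_s+1)=\Delta^{s-1}H(r_s+1)-\Delta^{s-1}H(r_s)<0$). The only cosmetic difference is in the base case $s=\delta\ge 1$, which you treat with the four-term multiplication-by-$x_{n-\delta+1}$ sequence while the paper counts terms of $\mathcal N(J)_{r_\delta+1}$ via the first-expansion formula \eqref{eq:espansione}; the two arguments are equivalent.
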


\begin{proof}
 First, we observe that:
\begin{enumerate}[(a)]
\item\label{it:pra)} the result of Lemma \ref{lemma:deltaH} implies $r_s\leq c_s$ for every $s>\delta$;
\item\label{it:prb)} item\eqref{it:pra)} and the result of Lemma \ref{lemma:ri<ci} imply $r_s=c_s$ if $r_s<r_{s-1}$ for some $s>\delta$.
\end{enumerate} 
We proceed by induction on $s$. For the base of induction we distinguish several cases.

If $s=\delta=0$ then $r_0=\mathrm{reg}(J)-1=c_0$.

If $s=\delta=1$ then we consider $A=R/J$ and $\Delta H(r_1+1)=H(r_1+1)-H(r_1)$. Recall that $H(t)$ counts the number of terms in $\mathcal N(J)_t$. The $H(r_1+1)$ terms in $\mathcal N(J)_{r_1+1}$ are all divisible by $x_n$ because $x_{n-1}^{r_1+1}$ belongs to $J$. Moreover, from \eqref{eq:espansione} we have that all terms in $\mathcal N(J)_{r_1+1}$ are obtained multiplying terms in $\mathcal N(J)_{r_1}$ by $x_n$. Thus, $H(r_1+1)=\vert\mathcal N(J)_{r_1+1}\vert \leq \vert \mathcal N(J)_{r_1}\vert = H(r_1)$.  Hence, $c_1<r_1+1$ and by item \eqref{it:pra)} we have $r_1=c_1$.

If $s=\delta>1$ then we consider $A=R/(J+(x_{n-\delta+2},\dots,x_{n}))$ which has Hilbert function $\Delta^{\delta-1}H$ thanks to Remark \ref{rem:zerodivisori}\eqref{it:zerodiv_ii} and Lemma \ref{lemma:solita seq}, like in the proof of Lemma \ref{lemma:deltaH}. Hence, we can proceed as in the case $s=\delta=1$ replacing the variable $x_n$ by $x_{n-\delta+1}$.

Assume now $s>\delta$. If $r_s<r_{s-1}$, then we can apply item \eqref{it:prb)}. If $r_s=r_{s-1}$, we have $r_s=r_{s-1}=c_{s-1}$ by the inductive hypothesis and, hence, $\Delta^{s-1}H(r_s)>0$ and $\Delta^{s-1}H(r_s+1)\leq 0$, so that $\Delta^s H(r_s+1)=\Delta^{s-1}H(r_s+1)- \Delta^{s-1}H(r_s)<0$. We can now conclude that $r_s=c_s$.
\end{proof}

\begin{remark}\label{rem:c1 complete intersection}
For every strongly stable ideal $J$ with the Hilbert function of an Artinian complete intersection,  we immediately obtain $c_1=\bar u_n$ from Theorem \ref{th:functions}. If $J$ is also almost revlex ideal, then $r_1=c_1=\bar u_n$ from Proposition \ref{prop:r=c}.
\end{remark}

\begin{example}\label{ex:quintica}
Let $I$ be the defining ideal of the general space rational curve of degree~$5$.
In $R=K[x_1,x_2,x_3]$, for the strongly stable ideals $J=(x_1^3,x_1^2x_2,x_1x_2^2,x_2^3,x_1^2x_3^2)$ and $J'=(x_1^3,x_1^2x_2,x_1x_2^2,x_1^2x_3,x_3x_2^3,x_2^4)$ the $K$-algebras $R/J$ and $R/J'$ have the same Hilbert function $H$  as $K[x_1,x_2,x_3,x_4]/(I,x_4)$:

\begin{small}
\begin{tabular}{l| rrrrrrc }
$t$   & $0$ & $1$ & $2$ & $3$ & $4$ & $5$ &$6$\\
\hline
$H(t)$  & $1$ &$3$ &$6$ &$6$ &$5$ &$5$ & $\dots$\\
\end{tabular}
\end{small}
\vskip 2mm
\noindent 
 
We have $c_1(H)=c_2(H)=2$. 
For the ideal $J$, which is almost revlex (see Example \ref{ex:almrev}), we have $r_1=r_2=2=c_1(H)=c_2(H)$   and for  the ideal $J'$ we have  $r'_1=3$ and $r'_2=2$. 
\end{example}

\begin{remark} \label{rem:funzione di Pardue}
Let $J\subset R$ be an almost revlex ideal. 
For every integer $s> \delta$, consider the ideal $\bar J = {(J+(x_{n-s+1},\dots,x_n))}/{(x_{n-s+1},\dots,x_n)}$ in $K[x_1,\dots,x_{n-s}]$. From Proposition \ref{prop:r=c}, the Hilbert function of $K[x_1,\dots,x_{n-s}]/\bar J$ is the function $\vert \Delta^s H (t)\vert$ that is defined in \cite[Section~3]{Pardue} in the following way: 
$\vert \Delta^s H (t)\vert =\Delta^s H(t)$, if $\Delta^s H(j)>0$ for $0\leq j\leq t$, and  $\vert \Delta^s H (t)\vert=0$ otherwise.
\end{remark}

From the above considerations, an exact closed formula for the number of the minimal generators of an almost revlex ideal follows in terms of the Hilbert function only. 

\begin{theorem}\label{thm:mingen}
Let $J\subset R$ be an almost revlex ideal and $B_J$ its minimal monomial basis, $\delta$ the Krull dimension and $H$ the Hilbert function of $R/J$. Then, 
\begin{equation}\label{eq:numbermingen}
\vert B_J \vert =\left\{\begin{array}{lr} \sum_{s=0}^{n-1} \Delta^{s} H(c_{s+1}), &\text{ if } \delta=0\\ 
\sum_{s=\delta}^{n-1} \Delta^{s} H(c_{s+1}) + \Delta^{\delta-1}H(c_\delta)-\Delta^{\delta-1}H(\varrho), &\text{ if } \delta>0
\end{array}\right. 
\end{equation}
where $\varrho=\min\{t : \Delta^{\delta-1}H(j)=\Delta^{\delta-1}H(j+1), \forall j\geq t\}$.
\end{theorem}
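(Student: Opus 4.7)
My plan is to treat the Artinian case $\delta=0$ by induction on $n$ using the standard identity
\[
|B_J|\;=\;|\mathcal{N}(J)\cap(J:x_n)|\,+\,|B_{\overline{J}}|,\qquad \overline{J}:=(J+(x_n))/(x_n)\subset K[x_1,\dots,x_{n-1}],
\]
which follows from Lemma~\ref{lemma:varie} (the first summand equals the number of minimal generators of $J$ divisible by $x_n$) and the direct check on Definition~\ref{def:revlex} that $\overline{J}$ inherits the almost revlex property from $J$. The case $\delta\ge 1$ then reduces to a one-step enhancement of the Artinian argument by first killing the guaranteed non-zero-divisor variables: by Remark~\ref{rem:zerodivisori}\eqref{it:zerodiv_ii} together with the observation that, for a stable ideal, a variable is a zero-divisor on $R/J$ exactly when some minimal generator is divisible by it, the variables $x_{n-\delta+2},\dots,x_n$ appear in no element of $B_J$, so passing to $J':=(J+(x_{n-\delta+2},\dots,x_n))/(x_{n-\delta+2},\dots,x_n)\subset K[x_1,\dots,x_{n-\delta+1}]$ preserves $|B_J|$ and produces an almost revlex ideal of Krull dimension $1$ with quotient Hilbert function $\Delta^{\delta-1}H$ (by iterated application of Lemma~\ref{lemma:solita seq} on the non-zero-divisors).

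In both cases the term $|\mathcal{N}(J)\cap(J:x_n)|$ is computed through the four-term exact sequence
\[
0\to \bigl(\mathcal{N}(J)\cap(J:x_n)\bigr)_t\to (R/J)_t\xrightarrow{\cdot x_n}(R/J)_{t+1}\to \bigl(R/(J+(x_n))\bigr)_{t+1}\to 0,
\]
which, by alternating-dimension bookkeeping and summing over $t\ge 0$, telescopes to
\[
|\mathcal{N}(J)\cap(J:x_n)|\;=\;\dim_K R/(J+(x_n))\,-\,\lim_{t\to\infty}H_{R/J}(t).
\]
In the Artinian induction step the limit is $0$, and $R/(J+(x_n))$ has Hilbert function $\Delta H(t)$ for $t\le c_1$ by Lemma~\ref{lemma:deltaH} and vanishes in higher degrees because strong stability forces $x_{n-1}^{c_1+1}\in J$ to spread to every monomial of degree $>c_1$ in $K[x_1,\dots,x_{n-1}]$; telescoping $\sum_{t=0}^{c_1}\Delta H(t)=H(c_1)$ yields $|\mathcal{N}(J)\cap(J:x_n)|=H(c_1)$. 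The inductive hypothesis provides $|B_{\overline{J}}|=\sum_{s=0}^{n-2}\Delta^s\overline{H}(c_{s+1}(\overline{J}))$, which two shift identities convert into $\sum_{s=1}^{n-1}\Delta^{s}H(c_{s+1})$: (i) $r_s(\overline{J})=r_{s+1}(J)$ straight from the definition of reduction number, so $c_s(\overline{J})=c_{s+1}(J)$ by Proposition~\ref{prop:r=c}; and (ii) $\Delta^s\overline{H}(t)=\Delta^{s+1}H(t)$ for $t\le c_{s+1}(J)$, from Lemma~\ref{lemma:deltaH} applied to $J$ with $s+1$ variables killed.

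For $\delta\ge 1$, running the same exact sequence for $J'$ and its smallest variable $x_{n-\delta+1}$ gives $|\mathcal{N}(J')\cap(J':x_{n-\delta+1})|=\Delta^{\delta-1}H(c_\delta)-\Delta^{\delta-1}H(\varrho)$: the now-Artinian quotient in $n-\delta$ variables contributes $\sum_{t\ge 0}\Delta^\delta H(t)=\Delta^{\delta-1}H(c_\delta)$ by the same telescoping trick, while $\Delta^{\delta-1}H$ stabilizes at $\Delta^{\delta-1}H(\varrho)$ by definition of $\varrho$. The remaining $|B_{\overline{J'}}|$ is handled by applying the Artinian formula of the previous paragraph to the Artinian almost revlex ideal $\overline{J'}$ in $n-\delta$ variables, using the analogous shift identities $c_{s+1}(\overline{J'})=c_{s+\delta+1}(J)$ and $\Delta^s\overline{H'}(t)=\Delta^{s+\delta}H(t)$, which produce $\sum_{s=\delta}^{n-1}\Delta^s H(c_{s+1})$; adding the two pieces reproduces the claimed formula. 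The main obstacle I anticipate is the bookkeeping around the shift identities, specifically verifying that each evaluation point $c_{s+1}(\overline{J})$ or $c_{s+1}(\overline{J'})$ always falls within the range where the identification with the appropriate $\Delta^{s+j}H$ is legitimate; this ultimately follows from the monotonicity $c_{s+1}(J)\le c_s(J)$ of Remark~\ref{rem:zerodivisori}\eqref{it:zerodiv_i}.
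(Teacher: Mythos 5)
Your proposal is correct, and it reaches the formula by a genuinely different organization of the count than the paper does. The paper works inside the fixed ring $R$: it groups the minimal generators by their minimal variable $x_{n-s}$, locates them in the degree range $[r_{s+1}+1,\,r_s+1]$, and counts them degree by degree via the three-term sequences $0 \to (A/(0:_A x_{n-s}))_{t-1}\to A_{t-1}\to A_t\to 0$, telescoping $-\Delta^{s+1}H(t)$ over that range to get $\Delta^s H(r_{s+1})$ for each $s$; Proposition~\ref{prop:r=c} then converts the $r_s$ to $c_s$. You instead induct on $n$, splitting $B_J$ into the generators divisible by $x_n$ (counted globally by Lemma~\ref{lemma:varie} together with the four-term sequence and the dimension count $\dim_K R/(J+(x_n))-\lim_t H(t)$) and $B_{\overline J}$, and you push the rest into the inductive hypothesis via the shift identities $c_s(\overline J)=c_{s+1}(J)$ and $\Delta^s\overline H=\Delta^{s+1}H$ in the relevant range. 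The two partitions of $B_J$ coincide (a generator has minimal variable $x_{n-s}$ exactly when it survives $s$ successive quotients and is then divisible by the last variable), so the difference is in the bookkeeping: the paper's degree-local count is self-contained and exposes the distribution of generators by degree, while your induction is cleaner globally but requires verifying that $\overline J$ inherits the almost revlex property and that the evaluation points stay in the range where the shift identities hold --- both of which you correctly flag, and both of which do go through (the first by the argument that a term $\tau'\succ\tau\in B_{\overline J}$ with $x_n\nmid\tau'$ lies in $J$ and hence is divisible by a generator not involving $x_n$; the second by $c_{s+1}\le c_s$ from Remark~\ref{rem:zerodivisori}\eqref{it:zerodiv_i} and Proposition~\ref{prop:r=c}). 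Your observation that for a monomial ideal a variable is a zero-divisor exactly when it divides some minimal generator, which justifies $\vert B_J\vert=\vert B_{J'}\vert$ in the case $\delta>1$, is also correct. The only omission is the (trivial) base case $n=1$ of the induction.
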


\begin{proof}
We start detecting the minimal generators with minimal variables $x_1,\dots,x_{n-\delta}$, respectively.
For every $\delta \leq s\leq n-1$, the minimal generators with minimal variable $x_{n-s}$ have degree between $r_{s+1}+1$ and $r_s+1$.

Consider first the case $s=\delta$. If $\delta=0$ then let $A:=R/J$, and if $\delta>0$ then let $A:={R}/{(J+(x_{n-\delta+1},\dots,x_n))}$ which has Hilbert function $\Delta^\delta H(t)$, for every $t\leq r_{\delta}$, because the variables $x_{n-\delta+1},\dots,x_n$ form a regular sequence for $A_{\leq r_\delta}$. For every $t\geq r_{\delta+1}+1$, we have the short exact sequence
$$0 \rightarrow \left({A}/{(0 :_A (x_{n-\delta}))}\right)_{t-1} \rightarrow A_{t-1} {\buildrel {\cdot x_{n-\delta}} \over \longrightarrow } A_t \rightarrow 0$$
because $\left({A}/{(x_{n-\delta})}\right)_t=0$, for every $t\geq r_{\delta+1}+1$. 
Then, we find that the minimal generators of degree $t$ with minimal variable $x_{n-\delta}$ are $\Delta^\delta H(t-1)-\Delta^\delta H(t)=-\Delta^{\delta+1}H(t)$ for every $r_{\delta+1}+1\leq t \leq r_\delta$ (also see Lemma \ref{lemma:ri<ci}) and $\Delta^\delta H(r_\delta)$ at degree $r_\delta+1$, because 
the short exact sequence becomes  
$$0 \rightarrow \left({A}/{(0 :_A (x_{n-\delta}))}\right)_{r_\delta} \rightarrow A_{r_\delta} \rightarrow 0.$$
So, the number of minimal generators of $J$ with minimal variable $x_{n-\delta}$ is $\Delta^\delta H(r_{\delta+1})$ because
\begin{multline*}-\Delta^{\delta+1}H(r_{\delta+1}+1)-
 \dots - \Delta^{\delta+1}H(r_\delta)+\Delta^\delta H(r_\delta)=
\\=\Delta^{\delta}H(r_{\delta+1})-\Delta^{\delta}H(r_{\delta+1}+1)
+\dots +\Delta^{\delta}H(r_\delta-1)-\Delta^{\delta}H(r_\delta)+\Delta^\delta H(r_\delta)=\\
= \Delta^\delta H(r_{\delta+1}).
\end{multline*}
We can repeat the above argument for every $\delta< s\leq n-1$, applying Lemma \ref{lemma:deltaH}, so that the minimal generators with minimal variable $x_{n-s}$ are $\Delta^s H(r_{s+1})$. We can then conclude thanks to Proposition \ref{prop:r=c}.
For the case $\delta=0$, this gives the statement on $\vert B_J\vert$. 

For what concerns the case $\delta>0$, we have to carefully consider the fact that $B_J$ in general contains terms with minimal variable $x_{n-\delta+1}$.
If $\delta > 1$, by definition of almost revlex ideal the sequence of variables $x_{n-\delta+2},\dots,x_n$ is a regular sequence for $R/J$. Then, we let $A:={R}/{(J+(x_{n-\delta+2},\dots,x_n))}$, which has Hilbert function $\Delta^{\delta-1} H$, thanks to Lemma \ref{lemma:solita seq}. If $\delta=1$, we let $A:=R/J$. The  $(\delta-1)$-th derivative $\Delta^{\delta-1}H$ has constant Hilbert polynomial $p(z)=d$. Thus, if $\varrho$ is the minimal value such that $\Delta^{\delta-1}H(t)=d$, for every $t\geq \varrho$, then $\Delta^{\delta}H(t)=0$ for every $t\geq \varrho+1$.

In this case, the ideal $J$ can have minimal generators with minimal variable $x_{n-\delta+1}$ in the degrees $t\geq r_{\delta}+1=c_\delta+1$. Moreover, for every $t\geq r_{\delta}+1=c_\delta+1$, we have $({A}/{(x_{n-\delta+1})})_t=0$, because $x_{n-\delta}^{r_{\delta}+1}\in J$. Hence, like in the proof of Lemma \ref{lemma:ri<ci}, for every $t\geq r_\delta +1$ we can consider the short exact sequence
$$0 \longrightarrow \left({A}/{(0 :_A (x_{n-\delta+1}))}\right)_{t-1} \longrightarrow A_{t-1} {\buildrel {\cdot x_{n-\delta+1}} \over \longrightarrow } \ A_{t}  \longrightarrow 0.$$
Thus, for every $t\geq r_{\delta}+1=c_\delta+1$, the possible minimal generators of degree $t$ with minimal variable $x_{n-\delta+1}$ are $\Delta^{\delta-1}H(t-1)-\Delta^{\delta-1}H(t)= -\Delta^{\delta} H(t)$.
In conclusion, the number of possible minimal generators of $J$ with minimal variable $x_{n-\delta+1}$ is $0$ if $r_{\delta}+1=c_\delta+1>\varrho$, and is $\sum_{j=c_\delta+1}^\varrho -\Delta^{\delta}H(j)=\Delta^{\delta-1}H(c_\delta)-\Delta^{\delta-1}H(\varrho)$ if $r_{\delta}+1=\varrho$.
\end{proof}

\begin{remark}
We highlight that the case $\delta=0$ of formula \eqref{eq:numbermingen}, as well as the summation from $\delta$ to $n-1$ of the other case, can be also deduced 
from the construction of almost revlex ideals that was given by K.~Pardue in \cite[Theorem 4]{Pardue}. For the Artinian case in characteristic $0$, see also  \cite{HW2009}. As a consequence of Theorem \ref{thm:mingen}, formulas for the Betti numbers of $J$ can be also obtained, because $J$ is strongly stable (for example, see \cite[Section 4]{HW2009}). 
\end{remark}


\begin{example}
Going back to Example \ref{ex:quintica}, we now apply Theorem \ref{thm:mingen} to the ideal $J=(x_1^3,x_1^2x_2,x_1x_2^2,x_2^3,x_1^2x_3^2)\subseteq K[x_1,x_2,x_3]$. In this case we have $\delta=1$, $\varrho=4$, $c_1=c_2=2$ and $c_3=0$. Thus, $\vert B_J\vert = \Delta H(2)+\Delta^2 H(0) + H(2) - H(4)= 3+1+6-5$.
\end{example}


\section{Construction of the almost revlex ideal for $H^{[n]}$}\label{sec:costruzionenuova}

Let $2\leq d_1\leq \cdots \leq d_n$ be integers. In this section, we describe our construction of the almost revlex ideal that is admitted by the Hilbert function $H^{[n]}$. Then, we make a comparison with the construction given in \cite[Theorem 4]{Pardue}.

\begin{theorem}\label{th:main}
For every $n$, the Hilbert function $H^{[n]}$ admits an almost revlex ideal.
\end{theorem}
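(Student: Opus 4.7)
The plan is to proceed by induction on $n$. The base case $n=1$ is immediate: $J=(x_1^{d_1})\subset K[x_1]$ is manifestly almost revlex with Hilbert function $H^{[1]}$. For the inductive step, suppose $H^{[n-1]}$ admits an almost revlex ideal $J'\subset K[x_1,\dots,x_{n-1}]$. By Remark~\ref{rem:varie almost}\eqref{it:varAlm_ii}, the extension $J'R$ is almost revlex in $R=K[x_1,\dots,x_n]$, and since $R/J'R\cong(K[x_1,\dots,x_{n-1}]/J')[x_n]$ with $x_n$ a non-zerodivisor, its Hilbert function equals $G(t):=\sum_{j=0}^t H^{[n-1]}(j)$. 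By Proposition~\ref{prop:inductive step}, $H^{[n]}(t)=G(t)-G(t-d_n)$, so $J'R$ already has the target Hilbert function in degrees $<d_n$, and only the degrees $\geq d_n$ need attention.

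I would then build $J\supseteq J'R$ degree by degree. Assuming $J_{\leq t-1}$ has been defined so that its Hilbert function matches $H^{[n]}$ through degree $t-1$, I would form the first expansion $\mathcal{E}(\mathcal{N}(J_{\leq t-1}))_{t}$, which by formula \eqref{eq:espansione} is partitioned into blocks by minimal variable and comes naturally ordered in revlex. I would then declare $\mathcal{N}(J)_t$ to be the $H^{[n]}(t)$ revlex-smallest elements of this expansion, and promote the remaining (revlex-larger) ones to new minimal generators of $J$ in degree $t$. This choice is forced by the almost revlex condition: if $\tau$ is any new minimal generator of degree $t$, every $\tau'\succ\tau$ of the same degree must lie in $J$, so the set of new minimal generators at degree $t$ must be a revlex top-segment of the expansion and $\mathcal{N}(J)_t$ a revlex bottom-segment. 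Provided the procedure is consistent, the resulting ideal will be almost revlex by construction.

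The main obstacle is the numerical bookkeeping across degrees: one must verify at each step that $|\mathcal{E}(\mathcal{N}(J_{\leq t-1}))_t|\geq H^{[n]}(t)$, so that there is room to leave $H^{[n]}(t)$ terms outside $J$, and also that the bottom-segment $\mathcal{N}(J)_t$ so chosen expands into a set of size at least $H^{[n]}(t+1)$ at the next step. This is where the reduction-number machinery of Section~\ref{sec:almostrevlex} becomes essential: applying \eqref{eq:espansione} to the revlex bottom-segment $\mathcal{N}(J)_{t-1}$ expresses $|\mathcal{E}(\mathcal{N}(J)_{t-1})_t|$ as a sum over the distribution of minimal variables of its elements, and this distribution is controlled by the reduction numbers $r_s(R/J_{\leq t-1})$. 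The target values $r_s=c_s(H^{[n]})$ are predicted by Proposition~\ref{prop:r=c} and Remark~\ref{rem:c1 complete intersection}, and their expected behavior follows from the monotonicity of $H^{[n]}$ in Theorem~\ref{th:functions} together with the persistence of sign of its higher derivatives granted by Theorem~\ref{th:decrescita}. Matching these predicted reduction numbers against the recursion $H^{[n]}(t)=G(t)-G(t-d_n)$ of Proposition~\ref{prop:inductive step} is the combinatorial core of the proof; once this matching is in place, termination is automatic from Theorem~\ref{th:symmetry}, which guarantees $H^{[n]}(t)=0$ for $t>m_n$.
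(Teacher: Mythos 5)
Your construction is the same as the paper's: induct on $n$, extend $J^{[n-1]}$ to $R$, and in each degree $t$ keep the $H^{[n]}(t)$ revlex-smallest terms of the first expansion while promoting the revlex-largest ones to minimal generators. Your observation that this choice is forced by the almost revlex condition, and that the output is almost revlex provided the counts work out, is correct and matches the paper. The problem is that the step you yourself flag as ``the main obstacle'' --- verifying $\vert\mathcal E(\mathcal N(J')_{t-1})\vert\geq H^{[n]}(t)$ at every degree --- is precisely the content of the theorem, and you only list the tools that should handle it without running the argument. As written, the proposal is a correct plan whose decisive inequality is asserted to be verifiable rather than verified.

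Concretely, the paper closes this gap as follows. Let $s$ be the largest index such that $\mathcal N(J')_{t-1}$ contains a term with minimal variable $x_{n-s}$. Since $J'=J'_{\leq t-1}$ is almost revlex and its Hilbert function agrees with $H^{[n]}$ through degree $t-1$, one gets the chain $c^{[n]}_{s+1}=c'_{s+1}=r'_{s+1}<t-1\leq r'_{s}=c'_{s}\leq c^{[n]}_{s}$ from Proposition \ref{prop:r=c} (applied with some care, since $J'$ is not Artinian and matches $H^{[n]}$ only partially), hence $\Delta^{s}H^{[n]}(t-1)>0$ and $\Delta^{s+1}H^{[n]}(t-1)\leq 0$. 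Formula \eqref{eq:espansione} together with Lemma \ref{lemma:deltaH} then gives $\vert\mathcal E(\mathcal N(J')_{t-1})\vert=\sum_{i=0}^{s}\Delta^{i}H^{[n]}(t-1)=H^{[n]}(t)-\Delta^{s+1}H^{[n]}(t)$, and Pardue's persistence result (Theorem \ref{th:decrescita}) upgrades $\Delta^{s+1}H^{[n]}(t-1)\leq 0$ to $\Delta^{s+1}H^{[n]}(t)\leq 0$, which is exactly the required inequality; the number of new generators in degree $t$ is then $h=-\Delta^{s+1}H^{[n]}(t)$, and termination follows once $s=0$ because $H^{[n]}$ is decreasing past $\bar u_n=c^{[n]}_1$ and vanishes beyond $m_n$. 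Two smaller points you gloss over: the paper starts from the truncation $(J^{[n-1]})_{\leq d_n}\cdot R$ rather than from $J^{[n-1]}R$, and it handles degree $d_n$ separately, where exactly one new generator is added because $H'(d_n)=H^{[n]}(d_n)+1$. Your sketch needs these details, and above all the displayed count, to become a proof.
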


\begin{proof}
We proceed by induction on $n$. 

For $n=1$, it is sufficient to consider the ideal $(x_1^{d_1})\subseteq K[x_1]$.


For every $n>1$, by inductive hypothesis the Hilbert function $H^{[n-1]}$ admits an almost revlex ideal,   that we denote by   $J^{[n-1]}\subseteq K[x_1,\dots,x_{n-1}]$. Consider the almost revlex ideal 
$$J':=\left(J^{[n-1]}\right)_{\leq d_n}\cdot K[x_1,\dots,x_n].$$
 The Hilbert function of $K[x_1,\dots,x_n]/J'$ is $H'(t)=\sum_{j=0}^t H^{[n-1]}(j)$, for every $t\leq d_n$, moreover $H'(t)=H^{[n]}(t)$ for every $t\leq d_n-1$ and $H'(d_n)=H^{[n]}(d_n)+1$. We set $c'_i=c_i(H')$ and $r'_i=r_i(R/J')$.

Let $\tau$ be the highest term of degree $d_n$, with respect to the degree reverse lexicographic term order, in $\mathcal N(J')_{d_n}$. Replace the ideal $J'$ by $J'+(\tau)$, so that $J'$ 
 now is an almost revlex ideal with Hilbert function $H'$ such that $H'(j)=H^{[n]}(j)$ for every $j\leq d_n$. 

For $t>d_n$, assume there is an ideal $J'=J'_{\leq t-1}$ which is an almost revlex ideal with Hilbert function $H'$ such that $H'(j)=H^{[n]}(j)$ for every $j\leq t-1$. 
Let $s$ be  the maximum integer such that there exists a term in $\mathcal N(J')_{t-1}$ that is divisible by the variable $x_{n-s}$. Then, $c^{[n]}_{s+1}=c'_{s+1}=r'_{s+1}<t-1\leq   r'_{s}=c'_{s}\leq c_s^{[n]}$, because $J'$ is almost revlex, hence $\Delta^{s} H^{[n]}(t-1)=\Delta^{s}H'(t-1) > 0$ and $\Delta^{s+1} H^{[n]}(t-1)=\Delta^{s+1}H'(t-1)\leq 0$. 

If $s>0$, from Lemma \ref{lemma:deltaH} and \eqref{eq:espansione} the first expansion of $\mathcal N(J')_{t-1}$ consists of
\vskip 1mm

$H^{[n]}(t-1)$ terms with minimal variable $x_n$

$\Delta H^{[n]}(t-1)$ terms with minimal variable $x_{n-1}$

$\vdots$

$\Delta^s H^{[n]} (t-1)$ terms with minimal variable $x_{n-s}$,

\vskip 1mm
\noindent that are $\sum_{i=0}^s \Delta^i H^{[n]}(t-1)$ terms. Observing that 
\[
H^{[n]}(t)=\Delta^s H^{[n]}(t)+\sum_{j=0}^{s-1}\Delta^j H^{[n]}(t-1),
\]
we obtain 
\[\vert \mathcal E(\mathcal N(J')_{t-1})\vert=H^{[n]}(t)-\Delta^{s+1}H^{[n]}(t).\]
Since $\Delta^{s+1} H^{[n]}(t-1)\leq 0$, thanks to Theorem \ref{th:decrescita} we have  $\Delta^{s+1}H^{[n]}(t)\leq 0$, from which it follows that the cardinality of the first expansion of $\mathcal N(J')_{t-1}$ is higher than or equal to $H^{[n]}(t)$.
In this case, let $\tau_1,\dots,\tau_h$ be the highest $h=\sum_{i=0}^s \Delta^i H^{[n]}(t-1) - H^{[n]}(t)=-\Delta^{s+1}H^{[n]}(t)\geq 0$ terms of degree $t$, with respect to the degree reverse lexicographic term order, in the first expansion of $\mathcal N(J')_{t-1}$. Replace the ideal $J'$ by $J'+(\tau_1,\dots,\tau_h)$, so that $J'$ becomes an almost revlex ideal with Hilbert function $H'$ such that $H'(j)=H^{[n]}(j)$ for every $j\leq t$.

We can repeat this construction until we find that the sous-escalier of $J'$ at degree $t-1$ only consists of $H^{[n]}(t-1)$ terms with minimal variable $x_n$, that is $s=0$. In this case, the first expansion of $\mathcal N(J')_{t-1}$ also consists of $H^{[n]}(t-1)$ terms of degree $t$ with minimal variable $x_n$. So, we have $c^{[n]}_1=c'_{1}=r'_{1}<t-1\leq r'_{0}=c'_{0}\leq c_0^{[n]}$. By Theorem \ref{th:functions}, we know that $c^{[n]}_1=\bar u_n$, because $H^{[n]}$ is strictly decreasing from  $\bar u_n$ on, and we can continue this construction up to degree $t=d_1+\dots +d_{n-1}+d_n-n$, obtaining an almost revlex ideal $J'$  having Hilbert function $H^{[n]}$.
\end{proof}

From now, for every $1\leq i \leq n$, we denote by $J^{[i]}$ the almost revlex such that the Hilbert function of $K[x_1,\dots,x_i]/J^{[i]}$ is $H^{[i]}$. Moreover, we let $r^{[i]}_s:=r_s(K[x_1,\dots,x_i]/J^{[i]})$.

The explicit construction of $J^{[n]}$ in the proof of Theorem \ref{th:main} can be algorithmically improved observing that, for every $i \in\{2,\dots,n-1\}$, it is sufficient to compute the generators of $J^{[i]}$ up to degree $d_{i+1}$. 

Summing up, we obtain Algorithm \ref{alg:almost}, for which we assume that the following procedures are available:
\begin{itemize}
\item \textsc{HFunction(\textnormal{$[d_1,\dots,d_j], h$})} takes in input an increasingly ordered list of $j$ positive integers $[d_1,\dots,d_j]$ and an integer $h>0 $, and returns the list of values of the Hilbert function $H^{[j]}(t)$, for every $t\leq h$.  Precisely, for $j=1$, the output is the function that assumes value $1$ for every $t< d_1$ and $0$ otherwise, for $j>1$ the output can be computed for instance by Proposition \ref{prop:inductive step}.
\item \textsc{Greatest(\textnormal{$\mathcal N,h$})} takes in input a set of terms $\mathcal N$ of degree $t$ and returns the greatest $h$ terms w.r.t.~degrevlex.
\end{itemize}

\begin{algorithm}[H]
\caption{\label{alg:almost} $\textsc{AlmostRevLex}(n,[d_1,\dots,d_n])$}
\begin{algorithmic}[1]
\REQUIRE A positive integer $n$
\REQUIRE an increasingly ordered   list of $n$ positive integers $[d_1,\dots,d_n]$,  with $d_1\geq 2$. 
\ENSURE  The ideal $J^{[n]}$.
\STATE $J'\leftarrow (x_1^{d_1})$;
\IF{ $n>1$ }
\STATE  $J' \leftarrow (J')_{\leq d_2}$;
\STATE $d_{n+1}\leftarrow(\sum_i d_i)-n+1$;
\FOR{$i=2,\dots,n$}
\STATE $H^{[i]}\leftarrow \textsc{HFunction}([d_1,\dots, d_i],d_{i+1})$; 
\STATE $J'\leftarrow J'\cdot K[x_1,\dots,x_i]$;
\STATE $\tau\leftarrow \max_{degrevlex} \mathcal N(J')_{d_i}$;
\STATE $J'\leftarrow J'+(\tau)$;
\FOR{$t=d_{i},\dots,d_{i+1}$}
\STATE $x_{n-s}\leftarrow \min_{degrevlex}\{\min(\tau)\vert \tau \in \mathcal N(J')_{t-1}\}$;
\STATE $h=-\Delta^{s+1}H^{[i]}(t)$;
\STATE $[\tau_1,\dots,\tau_h]\leftarrow \textsc{Greatest}(\mathcal N(J')_t,h)$; \STATE\label{endforprinc} $J'\leftarrow J'+(\tau_1,\dots,\tau_h)$;
\ENDFOR
\ENDFOR
\ENDIF
\RETURN $J'$
\end{algorithmic}
\end{algorithm}

\begin{remark}
In general, the ideal $J'=(J^{[i]})_{\leq d_i+1}$ that we obtain at line \ref{endforprinc} is not Artinian in $K[x_1,\dots, x_i]$, hence the Hilbert function of $K[x_1,\dots,x_i]/J'$ is neither $H^{[i]}$ nor $\Delta^i H^{[n]}$.
\end{remark}

\begin{example}\label{ex:constr}
The Hilbert function $H^{[3]}$ of a complete intersection generated by $3$ forms of degrees $d_1=3$, $d_2=d_3=4$ admits the following almost revlex ideal $J^{[3]}\subset K[x_1,x_2,x_3]$:
$$J^{[3]}=(x_{{1}}^{3},x_{{1}}^{2}x_{{2}}^{2},x_{{1}}x_{{2}}^{3},x_{{2}
}^{5},x_{{2}}^{4}x_{{3}},x_{{1}}^{2}x_{{2}}x_{{3}}^{3},x_{{1}}
x_{{2}}^{2}x_{{3}}^{3},x_{{2}}^{3}x_{{3}}^{3},x_{{1}}^{2}x_{
{3}}^{5},x_{{1}}x_{{2}}x_{{3}}^{5},x_{{2}}^{2}x_{{3}}^{5},x_{{1
}}x_{{3}}^{7},x_{{2}}x_{{3}}^{7},x_{{3}}^{9}
)$$
which is constructed by Algorithm \ref{alg:almost} in the following way. 
\begin{itemize}
\item[n=1:] $J^{[1]}=(x_1^3)$;
\item[n=2:] In order to compute $J^{[2]}$ up to degree $d_3$, we add to $J'=(J^{[1]})_{\leq 4}K[x_1,x_2]$ only the term $x_1^2 x_2^2$, and we do not need to explicitly compute the other generators of $J^{[2]}$. Observe indeed that $K[x_1,x_2]/J'$, with $J'=(x_1^3,x_1^2x_2^2)$, does not have Hilbert function $H^{[2]}$,  because  $H_{R/J'}(t)=H^{[2]}(t)$ only up to $t=d_3$.
\item[n=3:] Let now $J':= (x_1^3,x_1^2x_2^2)\cdot K[x_1,x_2,x_3]$. In order to compute $J^{[3]}$ up to degree $d_1+d_2+d_3-3+1=9$, we add to $J'$ the term $x_1 x_2^3$. We consider the highest term $\tau=x_1x_2^3$ of $\mathcal N(J')_4$ with respect to degrevlex and update $J'$ to the ideal $(J')_{\leq 4}+(x_1x_2^3)\subset K[x_1,x_2,x_3]$. In this way, $H'(4)=H^{[3]}(4)$, while $H'(5)-H^{[3]}(5)=11-9=2$.
The first expansion $\mathcal E(\mathcal N(J')_4)$ contains $H^{[3]}(4)=9$ terms with minimal variable $x_3$ and $\Delta H^{[3]}(4)=1$ term with minimal variable $x_2$:
$$x_{{2}}^{5} ,x_{{2}}^{4}x_{{3}},x_{{1}}^{2}x_{{2}}x_{{3}}^{2}, x_{{1}}x_{{2}}^{2}x_{{3}}^{2}
,x_{{2}
}^{3}x_{{3}}^{2},x_{{1}}^{2}x_{{3}}^{3}, 
x_{{1}}x_{{2}}x_{{3}}^{3},x_{{2}}^{2}x_{{3}}^{3},
x_{{2}}x_{{3}}^{4},x_{{1}}x_{{3}}
^{4}
, x_{{3}}^{5}.$$
\noindent
Then we update $J'$ to $J'+(x_{{2}}^{5} ,x_{{2}}^{4}x_{{3}})$, so that $H'(5)=H^{[3]}(5)=9$, while $H'(6)-H^{[3]}(6)=9-6=3$.
We then consider $\mathcal E(\mathcal N(J')_5)$, whose terms  are all divisible by $x_3$, because $\Delta H^{[3]}(6)<0$.  The greatest $3$ terms of  $\mathcal E(\mathcal N(J')_5)$ are  $x_1^2 x_2x_3^3,x_1x_2^2x_3^3, x_2^3x_3^3$. 
Again, we update $J'$ to $J'+(x_1^2 x_2x_3^3,x_1x_2^2x_3^3, x_2^3x_3^3)$, so that $H'(6)=H^{[3]}(6)=6$, while $H'(7)-H^{[3]}(7)=3$.
Iterating this process, we add to $J'$ the terms $x_3^5x_2^2,x_3^5x_2x_1,x_3^5x_1^2$, so that $H'(7)=H^{[3]}(7)$, and the terms $x_3^7x_2,x_3^7x_1$, so that $H'(8)=H^{[3]}(8)$. Finally, the first expansion $\mathcal E(\mathcal N(J)_8)$ contains only the term $x_3^9$, which we add to $J'$, obtaining $J'=J^{[3]}$.
\end{itemize}
\end{example}

In \cite[Theorem 4]{Pardue}, K. Pardue characterizes all the Hilbert functions that admit an almost revlex ideal. The symmetry of a Hilbert function is not a sufficient condition for admitting an almost revlex ideal, indeed. For example, the symmetric sequence ${\bf h}=(1,13,12,13,1)$ is the $h$-vector of Gorenstein ideals (see \cite{Stanley78,AhnShin}), but this Hilbert function does not admit an almost revlex ideal. We can explain this fact either observing that $\bf h$ does not satisfy the conditions of \cite[Theorem 4]{Pardue} or looking at the structure of the expansion of a stable ideal (see formula \eqref{eq:espansione}). 

In \cite[Theorem 4]{Pardue}, K. Pardue considers a Hilbert function $H$, the behavior of which has been recalled in Theorem \ref{th:decrescita}, and constructs an almost revlex ideal $J$ such that $H$ is the Hilbert function of $R/J$. He proceeds by induction performing a hyperplane section by $x_n$, so that the new generators to be added have minimal variable $x_n$. Then, he uses the formula of Eliahou and Kervaire \cite{EK} for the Hilbert series of a stable ideal, which is based on the shape of the minimal generators of the ideal, in order to guarantee the correctness of his construction. 

The construction given by K. Pardue in \cite[proof of Theorem 4, (3) $\rightarrow$ (1)]{Pardue} is more general than ours, since it concerns not only the Hilbert functions of complete intersections. Nevertheless, the Hilbert functions and almost revlex ideals that one has to consider  following Pardue's induction are different  from  those involved in the proof of Theorem \ref{th:main}. Indeed, in order to construct an almost revlex ideal in $K[x_1,\dots,x_n]$ having Hilbert function $H$, Pardue considers the almost revlex ideal in $K[x_1,\dots,x_{n-1}]$ having Hilbert function $|\Delta^1 H(t)|$ (see Remark \ref{rem:funzione di Pardue}). Our construction starts from the ideal $J^{[n-1]}$ which has Hilbert function $H^{[n-1]}$. In general, if $H=H^{[n]}$ is the Hilbert function of the complete intersection   with integers $d_1\leq \dots\leq d_n$, then $H^{[n-s]}(t)\neq \vert \Delta^s H^{[n]}(t)\vert$, hence the corresponding almost revlex ideals are different. Furthermore, following Pardue's construction for $H^{[n]}$, the new generators to be added with minimal variable $x_n$ have in general a degree that is higher than $d_n$.

\begin{example}\label{ex:sharp}
The Hilbert function $H^{[3]}$ of a complete intersection generated by $3$ forms of degrees $d_1=3$, $d_2=d_3=4$ admits the  almost revlex ideal $J^{[3]}\subset K[x_1,x_2,x_3]$, whose construction is given in Example \ref{ex:constr} according to Algorithm \ref{alg:almost}.
Take into account that $J^{[2]}=(x_1^3,x_2^2x_1^2,x_1x_2^4,x_2^6)\subset K[x_1,x_2]$.  We denote by $H'$ the Hilbert function of $K[x_1,x_2,x_3]/J'$ when $J'=(J^{[2]})_{\leq d_3}\cdot K[x_1,x_2,x_3]$ and by $H''$ that of $K[x_1,x_2]/ (J^{[2]})_{\leq d_3}$. Observe that $H'(t)=\sum_{j=0}^t H^{[2]}(j)$ for every $t\leq d_3$, and $H''(t)=H^{[2]}(t)$ for every $t\leq d_3$.

\begin{small}
\begin{tabular}{r| rrrrrrrrrrl }
$t$   & $0$ & $1$ & $2$ & $3$ & $4$ & $5$ & $6$ & $7$ & $8$ & $9$ & $10$ \\
\hline
$H^{[3]}(t)$ & $1$ & $3$ & $6$ & $9$ & $10$ & $9$ & $6$ & $3$ & $1$ & $0$ & \dots  \\
$H'(t)$ & $1$ & $3$ & $6$ & $9$ & $11$ & $13$ & $15$ & $17$ & $19$ & $21$ & \dots \\
$H^{[2]}(t)$ & $1$ & $2$ & $3$ & $3$ & $2$ & $1$ & $0$ & $0$ & $0$ & $0$ & \dots \\
$H''(t)$ & $1$ & $2$ & $3$ & $3$ & $2$ & $2$ & $2$ & $2$ & $2$ & $2$ & \dots \\
\end{tabular}
\end{small}

We highlight that the path one follows  by Pardue's construction is different from  the path of our construction: using Theorem \ref{th:main} for every $i\leq n-1$ one considers the Hilbert function $H^{[i]}(t)$ and constructs $(J^{[i]})_{\leq d_{i+1}}$, while using \cite[proof of Theorem 4, (3) $\rightarrow$ (1)]{Pardue} one considers the Hilbert function $\vert \Delta^i H^{[n]}(t)\vert$.
In particular, following  \cite[proof of Theorem 4, (3) $\rightarrow$ (1)]{Pardue}
in order to construct the almost revlex ideal $J^{[3]}\subset K[x_1,x_2,x_3]$, we consider the almost revlex ideal $I=(x_1^3,x_1^2 x_2^2,x_1x_2^3,x_2^5)$  in $K[x_1,x_2]$ having Hilbert function  $\vert\Delta H^{[3]}(t)\vert$:

\begin{small}
\begin{tabular}{r| rrrrrrl }
$t$   & $0$ & $1$ & $2$ & $3$ & $4$ & $5$ & $6$ \\
\hline
$\vert\Delta H^{[3]}(t)\vert$ & $1$ & $2$ & $3$ & $3$ & $1$ & $0$  & $\cdots$
\end{tabular}
\end{small}
\vskip 2mm
\noindent Observe that  $\vert\Delta H^{[3]}(t)\vert$ is not the Hilbert function  of a complete intersection because it is not symmetric. 
\end{example}

\begin{remark}\label{rem:mingen ci}
The proof of Theorem \ref{th:main} gives rise to the following formula for the number of minimal generators of $J^{[n]}$, which is of course equivalent to that of Theorem \ref{thm:mingen}:
\begin{equation}\label{ex:remark}
\vert B_{J^{ [n] }} \vert = \sum_{s=0}^{n-1} \sum_{j=c_{s+1}+1}^{c_s} -\Delta^{s+1} H^{[n]}(j+1).
\end{equation}
We obtain \eqref{ex:remark} in the following way. 
If $n=1$ then $c_0=d_1$ and $c_1=0$. Further, $\Delta H^{[1]}(t)=0$ for every $1\leq t\leq c_0$ and $\Delta H^{[1]}(t)=-1$. Hence, \eqref{ex:remark} holds for $n=1$.
If $n>1$ and $J':= (J^{[n-1]})_{\leq d_n}K[x_1,\dots,x_n]$ like in the proof of Theorem \ref{th:main}, then
for every $t<d_n$, we have $\Delta^{s+1} H^{[n]}(t)=\Delta^s H^{[n-1]}(t)$ and  $\Delta^{s+1} H^{[n]}(d_n)=\Delta^s H^{[n-1]}(d_n)-1$ thanks to Proposition \ref{prop:inductive step}; further, for every $s\geq 1$, if $c^{[n]}_s<d_n$, then $c^{[n]}_s=c^{[n-1]}_{s-1}$. 
Moreover, up to degree $d_n$ the ideal $J'$ has the same number of minimal generators of $J^{[n-1]}$, that is 
$$\sum_{s=0}^{n-2} \sum_{j=\min\{c^{[n-1]}_{s+1}+1,d_n-1\}}^{\min\{c^{[n-1]}_s,d_n-1\}} -\Delta^{s+1} H^{[n-1]}(j+1)= 
-1+\sum_{s=0}^{n-2} \sum_{j=\min\{c^{[n]}_{s+2}+1,d_n-1\}}^{\min\{c^{[n]}_{s+1},d_n-1\}} -\Delta^{s+2} H^{[n]}(j+1)=$$
$$= -1+ \sum_{s=1}^{n-1} \sum_{j=\min\{c^{[n]}_{s+1}+1,d_n-1\}}^{\min\{c^{[n]}_{s},d_n-1\}} -\Delta^{s+1} H^{[n]}(j+1).$$
At degree $d_n$ the ideal $J^{[n]}$ has one more generator with respect to $J'$. Then, we conclude adding the minimal generators of degrees $j\geq d_n$ as in the proof of Theorem \ref{th:main}. 
\end{remark}


\section{A lower bound for the dimension of the tangent space to a punctual Hilbert scheme at stable ideals}\label{sec:application}

In the present section, we start considering Artinian \emph{quasi-stable} ideals $J$ in $R$.

Every quasi-stable ideal $J$ has a special set $\mathcal P(J)$ of monomial generators, in general non-minimal, that is called the \emph{Pommaret basis} of $J$ and allows a unique decomposition of every term $\tau \in J$ in the following sense: 
\begin{itemize}
\item[($\star$)] for every $\tau\in J$, there is a unique $x^\alpha\in \mathcal P(J)$ so that $\tau=x^\alpha x^\delta$ with  $\max(x^\delta)\preceq \min(x^\alpha)$. 
\end{itemize}
Stable ideals are quasi-stable, and strongly stable ideals are stable. If $J$ is stable, then $\mathcal P(J)=B_J$. 

Let $H$ be the Hilbert function of $R/J$ and $S:=R [x_{n+1}]$. The Hilbert polynomial $p(z)$ of $S/(JS)$ is the constant $D:=\sum_{j\geq 0} H(j)$, because $J$ is Artinian. We can then identify $J$ with the point $\mathrm{Proj}(S/(JS))$ of the Hilbert scheme $\mathrm{Hilb}_{D}^n$, which parameterizes flat families of closed subschemes in $\mathbb P^n_K$  with Hilbert polynomial $D$. Hence, we will say that $J$ is (or corresponds to) a point of $\mathrm{Hilb}_{D}^n$.

Our aim is to give a lower bound for the dimension of the Zariski tangent space $\mathcal T_J$ to $\mathrm{Hilb}_{D}^n$ at the point $J$, using techniques and results that have been developed in \cite{BCR_Macaulay, Gore}. A similar investigation has been given in \cite[Lemma 6.1 and Theorem 6.2]{CLMR2011} under the more restrictive hypotheses that $JS$ is a \emph{hilb-segment ideal} with respect to a suitable term order and the field $K$ has characteristic zero. 
Although we will use the Jacobian criterion in order to compute $\mathcal T_J$, for the moment we can consider any field $K$ because $J$ is a $K$-valued point ($K$-point, for short) of the Hilbert scheme, i.e.~a closed point with residue field $K$.

Referring to \cite{BCR_Macaulay,Gore,LR2011}, first we briefly recall how one can obtain a set of equations defining the Zariski tangent space to $\mathrm{Hilb}^n_{D}$ at $J$, but more generally at any point of a suitable open subset of $\mathrm{Hilb}^n_{D}$.
Also recall that $J$ is an Artinian quasi-stable ideal and $\mathcal P(J)$ denotes its Pommaret basis. 

For every $x^\gamma \in \mathcal P(J)$, we define the \emph{marked polynomial} 
$$f_\gamma=x^\gamma+\sum_{x^\beta \in \mathcal N(J)}C_{\gamma\beta} x^\beta \in K[C][x_1,\dots,x_n],$$ 
where $x^\gamma$ is called the {\em head term} of $f_\gamma$ and $C=\{C_{\alpha\beta}\vert x^\alpha \in \mathcal P(J), x^\beta \in \mathcal N(J)\}$ is the set of all parameters  appearing in the marked polynomials over $\mathcal P(J)$
. In this context, the coefficient of a term in the variables $x_1,\dots,x_n$ will be called a {\em $x$-coefficient}.

For every $x^\gamma\in \mathcal P(J)$ and  $x_j \succ\min(x^\gamma)$, we consider the polynomial $x_j f_\gamma$ and, by a suitable Noetherian confluent reduction process that is based on property ($\star$) (see \cite[Definition 4.2]{BCR_Macaulay}), compute the polynomials that are involved in the following equality
\begin{equation}\label{eq:ridMarc}
x_j f_\gamma=\sum p_{\alpha'\delta'}x^{\delta'} f_{\alpha'}+ h_{j\gamma},
\end{equation}
where $x^{\alpha'}$ belongs to $\mathcal P(J)$, $\max(x^{\delta'}) \preceq \min(x^{\alpha'})$, $h_{j\gamma}$ is supported on $\mathcal N(J)$, and $p_{\alpha'\delta'}$ belongs to $K[C]$. All polynomials and terms in \eqref{eq:ridMarc} are uniquely determined (see \cite[Proposition 4.3]{BCR_Macaulay}).

Let $\mathcal U\subset K[C]$ be the set consisting of all the $x$-coefficients appearing in the polynomials $h_{j\alpha}$. The ideal generated by $\mathcal U$ in $K[C]$ defines an open affine subscheme of $\mathrm{Hilb}_{D}^n$, that is called \emph{$J$-marked scheme} and usually denoted by $\mathrm{Mf}(J)$  \cite[Propositions 5.6 and 6.13(ii)]{BCR_Macaulay}. 

Being $\mathrm{Mf}(J)\subset \mathbb A^{\vert C\vert}$ an open subscheme of $\mathrm{Hilb}_{D}^n$, we can explicitely compute the Zariski tangent space to $\mathrm{Hilb}_{D}^n$ at any point belonging to $\mathrm{Mf}(J)$ using the polynomials in the set $\mathcal U$, as explained in \cite[Corollary 1.9 and Remark 1.10]{Gore}. For what concerns the quasi-stable ideal $J$, we can simply take the linear part of the polynomials in $\mathcal U$. 

\begin{remark}\label{rem:efficient way}
For an efficient way to only compute the linear part of the polynomials in $\mathcal U$, one can use \cite[Algorithm 2]{LR2011}, which also applies to marked schemes. 
At http://wpage.unina.it/cioffifr/MaterialeAlmostRevLex an implementation of this algorithm for marked schemes on an Artinian quasi-stable ideal is available.
\end{remark}

In order to finally obtain a lower bound for $\dim \mathcal T_J$, we focus on the parameters $C_{\alpha\beta}$ that never appear in the linear part of the polynomials in $\mathcal U$. It is clear that, if $\bar C$ is a set of such parameters, then $\vert C\vert \ \geq \ \dim_K \mathcal T_J \ \geq \ \vert \bar C\vert$ indeed.

\begin{theorem}\label{th:lower bound}
If $J\subset R$ is an Artinian stable ideal, then 
for every $x^\beta\in \mathcal N(J)\cap (J:x_n)$ and for every $x^\alpha \in B_J$, the parameter $C_{\alpha\beta}$ does not appear in the linear part of the polynomials in $\mathcal U$.
\end{theorem}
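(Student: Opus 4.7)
The plan is to trace, through one step of the reduction \eqref{eq:ridMarc}, precisely which parameters can appear linearly in the $x$-coefficients of the $h_{j\gamma}$, and then to exclude those of the form $C_{\alpha\beta}$ with $x^\beta\in\mathcal N(J)\cap(J:x_n)$ by combining Lemma \ref{lemma:varie} with the stability of $J$.

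First I would write $f_\gamma=x^\gamma+g_\gamma$ with $g_\gamma=\sum_\beta C_{\gamma\beta}x^\beta$ and, for any $x_j\succ\min(x^\gamma)$, invoke property~($\star$) to get the unique Pommaret decomposition $x_jx^\gamma=x^{\alpha'}x^{\delta'}$ with $x^{\alpha'}\in B_J$ (note $\mathcal P(J)=B_J$ for stable $J$). Subtracting $x^{\delta'}f_{\alpha'}$ cancels the head term and leaves
\[
x_jf_\gamma-x^{\delta'}f_{\alpha'}=\sum_{\beta} C_{\gamma\beta}\,x_jx^\beta\ -\ \sum_{\eta} C_{\alpha'\eta}\,x^{\delta'}x^\eta,
\]
which is already purely of $C$-degree one. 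Any further reduction step eliminates a monomial still lying in $J$ by multiplying an already $C$-linear coefficient against a tail $g_{\alpha''}$, itself $C$-linear; every such correction therefore has $C$-degree at least two. Consequently the $C$-linear part of $h_{j\gamma}$ is exactly the restriction of the displayed sum to monomials in $\mathcal N(J)$, and $C_{\alpha\beta}$ can appear in the linear part of some polynomial of $\mathcal U$ only through one of two routes: \textbf{(a)} $\gamma=\alpha$ together with $x_jx^\beta\in\mathcal N(J)$, or \textbf{(b)} $\alpha'=\alpha$, $\eta=\beta$ with $x^{\delta'}x^\beta\in\mathcal N(J)$ for the Pommaret datum $(x^{\alpha'},x^{\delta'})$ attached to some admissible $(j,\gamma)$.

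Now I would apply the hypothesis $x^\beta\in\mathcal N(J)\cap(J:x_n)$. By Lemma \ref{lemma:varie} this is equivalent to $x_nx^\beta\in B_J$, so $\min(x_nx^\beta)=x_n$; stability of $J$ applied with each $x_j\succ x_n$ then yields $x_jx^\beta=x_j(x_nx^\beta)/x_n\in J$, while $x_nx^\beta\in J$ is the hypothesis. Thus $x_jx^\beta\in J$ for every variable $x_j$, which directly kills route~(a). For route~(b), I first note $x^{\delta'}\neq 1$: otherwise $x^\alpha=x_jx^\gamma$ would have the minimal generator $x^\gamma$ as a proper divisor, contradicting the minimality of $B_J$. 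Consequently some variable $x_{j'}$ divides $x^{\delta'}$, and by the first sentence of this paragraph $x_{j'}x^\beta\in J$, so $x^{\delta'}x^\beta\in J$, contradicting the condition for route~(b).

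The main delicacy is the $C$-degree bookkeeping in the second paragraph: one must be sure that, once the head $x_jx^\gamma$ has been eliminated, no later iteration of the reduction can feed a fresh $C$-linear contribution back into $h_{j\gamma}$. This rests on the purely formal observation that every tail $g_{\alpha''}$ entering the process is itself $C$-linear, so all subsequent corrections are products of two linear-in-$C$ quantities. Once this is granted, the combinatorial exclusion of routes (a) and (b) above applies uniformly to every $x^\alpha\in B_J$, and the theorem follows.
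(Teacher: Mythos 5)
Your proposal is correct and follows essentially the same route as the paper's proof: both rest on the observation that stability forces $x_jx^\beta\in J$ for every variable $x_j$ once $x_nx^\beta\in J$, and both reduce the question to the only two ways $C_{\alpha\beta}$ could enter linearly --- via the tail of $f_\alpha$ itself when $\gamma=\alpha$, or via $f_\alpha$ reducing the head $x_jx^\gamma=x^{\delta'}x^{\alpha}$ with $x^{\delta'}\neq 1$ --- while all later reduction steps contribute only at $C$-degree at least two. Your explicit description of the linear part of $h_{j\gamma}$ after the first head-cancellation is a slightly more systematic packaging of the paper's case analysis, but the key facts invoked (stability, minimality of $B_J$, and the degree bookkeeping in $K[C]$) are identical.
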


\begin{proof}

We first observe that $x_n x^\beta \in J$ implies $x_j x^\beta \in J$ for every $j$, because $J$ is a stable ideal. 
Let $x^\alpha$ belong to $B_J$. Our aim is to prove that for every $x^{\gamma}\in B_J$, for every $x_j \succ\min(x^{\gamma})$, in the writing \eqref{eq:ridMarc} for $x_jf_{\gamma}$, the coefficient $C_{\alpha\beta}$ does not appear linearly in the $x$-coefficients of $h_{j\gamma}$.

We first consider the case $\gamma=\alpha$. Recall that $x_jx^\beta$ belongs to $J$. Hence, following the procedure in \cite[Section 4]{BCR_Macaulay} in order to compute the writing \eqref{eq:ridMarc} for $x_jf_\alpha$ with $x_j\succ \min(x^\alpha)$, we find some $x^{\delta'}f_{\alpha'}$ ($\alpha'\not=\alpha$) such that $x_jx^\beta=x^{\alpha'}x^{\delta'}$. Then, $C_{\alpha\beta}x_jx^\beta$ can be rewritten by $C_{\alpha\beta}x^{\delta'}f_{\alpha'}$ and $C_{\alpha\beta}$ appears in $p_{\alpha'\delta'}\in K[C]$ in the right-hand side of \eqref{eq:ridMarc}, but not in the $x$-coefficients of the polynomials $h_{j\alpha}$, in this case.

We now assume that $\gamma\neq \alpha$ and $f_{\alpha}$ is used in the writing \eqref{eq:ridMarc} for $x_jf_\gamma$, with $x_j\succ \min(x^\gamma)$. We first observe that $x_jx^\gamma$ does not belong to $B_J$, otherwise the term $x^\gamma$ of $B_J$ would divide another minimal monomial generator of $B_J$. Hence, according to \cite[Section 4]{BCR_Macaulay}, if $f_\alpha$ is used for rewriting $x_jx^\gamma$, that is $x_jx^\gamma= x^\delta x^{\alpha}$ with $x^\delta\neq 1$, we conclude as in the previous case that $C_{\alpha\beta}$ does not appear linearly in the $x$-coefficients of $h_{j\gamma}$. If $f_\alpha$ is used for rewriting an other term $\tau$, then $\tau$ must to have a non-constant coefficient in $K[C]$ and $C_{\alpha\beta}$ appears in the {\em non-linear} part either of some $p_{\alpha\delta'}\in K[C]$ in the right-hand side of \eqref{eq:ridMarc} or of the $x$-coefficients of $h_{j\gamma}$.
\end{proof}

\begin{remark}
Observe that if $x^\beta\in \mathcal N(J)$ and $x_\ell x^\beta \in J$ for some $\ell<n$, we cannot identify any $x^\alpha \in B_J$ such that $C_{\alpha\beta}$ does not appear in the linear part of the $x$-coefficients of the polynomials $h_{j\gamma}$. Indeed, if $x^\delta f_\alpha$ appears in the right-hand side of \eqref{eq:ridMarc} for some $x^\gamma \in B_J$, then $x^\delta<_{lex}x_j$, hence there is no way to guarantee that $C_{\alpha\beta}$ does not appear linearly in the $x$-coefficients of $h_{j\gamma}$.
\end{remark}

\begin{corollary}\label{cor:lower bound}
If $J\subset R$ is an Artinian stable ideal, then 
\[\vert B_J\vert \cdot \vert\mathcal N(J)\vert \geq \ \dim \mathcal T_{J} \ \geq 
 \vert B_{J}\vert \cdot \vert \{\tau \in B_J : \tau/x_n \in \mathbb T\}\vert.\]
\end{corollary}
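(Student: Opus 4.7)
The plan is to read off both inequalities directly from the framework set up just before the statement, using Theorem \ref{th:lower bound} and Lemma \ref{lemma:varie} as the only nontrivial ingredients. Since $J$ is stable, the Pommaret basis coincides with the minimal monomial basis: $\mathcal P(J)=B_J$. Thus the set of parameters $C=\{C_{\alpha\beta}\ \vert\ x^\alpha\in B_J,\ x^\beta\in\mathcal N(J)\}$ has cardinality exactly $|B_J|\cdot|\mathcal N(J)|$, and since $\mathrm{Mf}(J)\subset\mathbb A^{|C|}$ is an open neighborhood of $J$ in $\mathrm{Hilb}^n_D$, the tangent space $\mathcal T_J$ is cut out inside $\mathbb A^{|C|}$ by the linear parts of the elements of $\mathcal U$. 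This immediately gives the upper bound $\dim\mathcal T_J\le |C|=|B_J|\cdot|\mathcal N(J)|$.

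For the lower bound, the strategy is to exhibit a set $\bar C\subseteq C$ of parameters that, by Theorem \ref{th:lower bound}, never appear in the linear part of any polynomial of $\mathcal U$; then $\dim\mathcal T_J\ge|\bar C|$, as noted in the paragraph preceding Theorem \ref{th:lower bound}. I would take
\[
\bar C:=\bigl\{C_{\alpha\beta}\ \vert\ x^\alpha\in B_J,\ x^\beta\in\mathcal N(J)\cap(J:x_n)\bigr\}.
\]
By Theorem \ref{th:lower bound}, every such $C_{\alpha\beta}$ has the required property, so
\[
\dim\mathcal T_J\ \ge\ |\bar C|\ =\ |B_J|\cdot\bigl|\mathcal N(J)\cap(J:x_n)\bigr|.
\]

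To finish, I rewrite the factor $|\mathcal N(J)\cap(J:x_n)|$ in the stated combinatorial form. This is exactly the content of Lemma \ref{lemma:varie}, which asserts that for a stable ideal,
\[
\bigl|\mathcal N(J)\cap(J:x_n)\bigr|\ =\ \bigl|\{\tau\in B_J\ :\ \tau/x_n\in\mathbb T\}\bigr|.
\]
Substituting this identity into the previous inequality yields the desired lower bound, completing both sides of the corollary.

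I do not anticipate any serious obstacle: all the technical work has already been carried out in Theorem \ref{th:lower bound} (which handles the crucial reduction-process argument ensuring that $C_{\alpha\beta}$ is not produced linearly when $x_n x^\beta\in J$) and in Lemma \ref{lemma:varie} (which converts the set-theoretic count into a count over minimal generators divisible by $x_n$). The only things to verify carefully are that $\mathcal P(J)=B_J$ for stable ideals, so that $|C|=|B_J|\cdot|\mathcal N(J)|$ and the hypothesis of Theorem \ref{th:lower bound} applies, and that Lemma \ref{lemma:varie} is stated precisely in the form required. Both are explicit in the background material above.
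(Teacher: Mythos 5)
Your proof is correct and follows essentially the same route as the paper: the upper bound from $\mathcal P(J)=B_J$ and the embedding $\mathrm{Mf}(J)\subset\mathbb A^{\vert C\vert}$, and the lower bound by combining Theorem \ref{th:lower bound} with the count in Lemma \ref{lemma:varie}. Nothing is missing.
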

\begin{proof}
The first inequality is a consequence of the construction of $\mathcal T_J$ by means of marked schemes. Indeed, the marked scheme is embedded in an affine space of dimension $\vert C\vert =\vert B_J\vert\vert \mathcal N(J)\vert$. The other inequality is a consequence of Theorem \ref{th:lower bound} and of Lemma \ref{lemma:varie}. 
\end{proof}

Thanks to Corollary \ref{cor:lower bound} we now obtain a sufficient condition for $J$ being a singular point in $\mathrm{Hilb}_D^n$ when $J$ is stable and also Borel-fixed over an infinite field $K$. From now, we assume that the field $K$ is infinite, because we will use notions and results that need this hypothesis.

Recall that Borel-fixed ideals are fixed points of an algebraic group action on the Hilbert scheme. More precisely, an ideal $J\subset R$ is Borel-fixed (Borel, for short) if $g(I)=I$ for every element $g$ of the Borel subgroup consisting of the upper triangular matrices on $K$ of order $n$. A Borel ideal is always a monomial quasi-stable ideal (not stable in general), but the property to be Borel depends on the characteristic of the field. In characteristic $0$, Borel ideals and strongly stable ideals coincide. In general, strongly stable ideals are Borel regardless of the characteristic of the field $K$, however there are quasi-stable (resp. stable) ideals that are not Borel, for every characteristic of the field.
%
%
%
In the study of Hilbert schemes, Borel ideals have a very important role  (see for instance \cite{Hart}).

\begin{theorem}\label{thm:singStable}
Let $K$ be an infinite field. With the above notation, if $J\subset R$ is an Artinian stable Borel-fixed ideal, then 
$$\vert B_{J}\vert \cdot\vert \{\tau \in B_J : \tau/x_n \in \mathbb T\}\vert> n\cdot D \  \Rightarrow \ J \text{ corresponds to a singular point in } \mathrm{Hilb}_{D}^n.$$
\end{theorem}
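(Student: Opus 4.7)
The plan is to combine the tangent space lower bound from Corollary \ref{cor:lower bound} with an upper bound on the local dimension of $\mathrm{Hilb}_D^n$ at $J$. By Corollary \ref{cor:lower bound} the hypothesis $|B_J|\cdot|\{\tau\in B_J:\tau/x_n\in \mathbb{T}\}|>nD$ immediately gives $\dim_K \mathcal{T}_J > nD$, so it suffices to prove that every irreducible component of $\mathrm{Hilb}_D^n$ passing through $J$ has dimension at most $nD$. Once this is established, the strict inequality $\dim_K \mathcal{T}_J > \dim_J \mathrm{Hilb}_D^n$ forces $\mathcal{O}_{\mathrm{Hilb}_D^n,J}$ not to be a regular local ring, whence $J$ is singular.

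For the upper bound, I would recall that $\mathrm{Hilb}_D^n$ is the Hilbert scheme of $0$-dimensional closed subschemes of $\mathbb{P}^n$ of length $D$, and it admits its distinguished principal component of dimension exactly $nD$, namely the closure of the open locus parameterizing $D$ distinct reduced points (each of which contributes $n$ independent tangent directions). To see that $J$ lies on this component, and on no component of larger dimension, I would exploit the Borel-fixedness of $J$: $J$ is a fixed point of a maximal torus action on $\mathrm{Hilb}_D^n$, and one can exhibit a flat one-parameter degeneration of a generic reduced $D$-tuple of points of $\mathbb{P}^n$ to $\mathrm{Proj}(S/JS)$ using this torus action. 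This is the direct analogue of \cite[Lemma 6.1 and Theorem 6.2]{CLMR2011}, where the same strategy is carried out for reverse-lexicographic segment ideals with the Hilbert function of general points, and the argument adapts to the broader class of stable Borel-fixed ideals treated here.

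The main obstacle is precisely this upper bound $\dim_J \mathrm{Hilb}_D^n \le nD$: by Iarrobino-type examples, for $n\ge 3$ and $D$ sufficiently large the full Hilbert scheme $\mathrm{Hilb}_D^n$ can carry components of dimension strictly exceeding $nD$, so one cannot dispense with the stable and Borel-fixed hypotheses on $J$. Verifying that $J$ lies on no such pathological component, by combining its combinatorial description through its Pommaret basis $\mathcal{P}(J)=B_J$ with the explicit equations $\mathcal{U}$ of the marked scheme $\mathrm{Mf}(J)$, is the nontrivial geometric input beyond Corollary \ref{cor:lower bound}.
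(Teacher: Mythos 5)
Your first step (Corollary \ref{cor:lower bound} gives $\dim_K\mathcal T_J>nD$) matches the paper, but the remainder of the argument has a genuine gap, and the reduction itself is stronger than what is needed. You reduce the problem to showing that \emph{every} irreducible component of $\mathrm{Hilb}_D^n$ through $J$ has dimension at most $nD$, and then you explicitly leave this as ``the nontrivial geometric input'' without proving it; that is precisely the hard part, and it is not how the paper proceeds. The paper only needs the much weaker fact that $J$ lies on \emph{some} component of dimension $nD$: by \cite[Corollary 19]{PardueThesis}, every Artinian Borel-fixed ideal lies on a common component with the lex-segment ideal, and by \cite{ReevesStillman} the lex-segment point is smooth, hence lies on a unique component, of dimension $n\cdot D$. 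Then either this is the only component through $J$, in which case $\dim_J\mathrm{Hilb}_D^n=nD<\dim\mathcal T_J$ forces singularity, or $J$ lies on at least two components, in which case it is singular automatically. So one never has to rule out larger components through $J$ --- a point your reduction obscures, and which makes the intermediate statement you rely on both unproven and possibly false for a general stable Borel-fixed $J$ (it could a priori lie on an Iarrobino-type component of dimension exceeding $nD$ without this hurting the paper's argument at all).

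Separately, the mechanism you propose for placing $J$ on the principal component does not work for arbitrary stable Borel-fixed ideals. A one-parameter torus degeneration of a generic reduced $D$-tuple of points produces the (generic) initial ideal of that particular ideal of points, which is one specific Borel-fixed ideal, not an arbitrary one; Borel-fixedness of $J$ makes it a fixed point of the torus action but does not by itself produce a flat family connecting a generic reduced scheme to $\mathrm{Proj}(S/JS)$. The connectedness input actually needed is Pardue's (or Reeves') chain of deformations joining an arbitrary Borel-fixed ideal to the lex-segment ideal within a single component, which is a different and more delicate construction than a $\mathbb G_m$-degeneration, and it lands you on the lex component without any need to identify the smoothable component or to control all components through $J$.
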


\begin{proof}
From the proof of \cite[Corollary 19]{PardueThesis} it follows that for every Artinian Borel ideal $J$ there is a component of $\mathrm{Hilb}_{D}^n$ that contains $JS$ and the lex-segment ideal. This component must be the same for all these Borel ideals (see also \cite{ReevesRadius} in characteristic zero), because there exists a unique component containing the lex-segment ideal. Indeed, recall that the lex segment ideal is a smooth point in the Hilbert scheme, hence it lies on a unique component of the Hilbert scheme, which has dimension $n\cdot D$ (see \cite{ReevesStillman}). 
 
Hence if  $\vert B_{J}\vert \cdot \vert \{\tau \in B_J : \tau/x_n \in \mathbb T\}\vert > n\cdot D$, then by Corollary \ref{cor:lower bound} the dimension of the Zariski tangent space to $\mathrm{Hilb}^n_d$ at $J$ is strictly bigger than the dimension of the lex component. So, $J$ is a singular point of this component and of $\mathrm{Hilb}_D^n$.
\end{proof}

\begin{remark}
Observe that the condition that is given in Theorem \ref{thm:singStable} is sufficient only. For instance, the ideal $J^{[3]}$ of Example \ref{ex:sharp} does not satisfy the numerical condition of Theorem~\ref{thm:singStable}, being $\vert B_{J^{[3]}}\vert=14$ and $\vert \{\tau \in B_{J^{[3]}} : \tau/x_3 \in \mathbb T\}\vert=10$. Nevertheless, $J^{[3]}$ corresponds to a singular point of $\mathrm{Hilb}_{48}^3$ because a direct computation gives $\dim \mathcal T_{J^{[3]}}=286>144$, where $144=48\cdot 3$ is the dimension of the lex component of $\mathrm{Hilb}_{48}^3$. We will find some analogous situations in Examples \ref{ex:d=2} and \ref{ex:d=3}.
\end{remark}

\section{Some almost revlex singular points in a Hilbert scheme}
\label{sec:singrevlex}

In this section, over an infinite field $K$ we specialize the results of Section \ref{sec:application} to Artinian almost revlex ideals and find several classes of almost revlex ideals with the Hilbert function of a complete intersection that are singular points in a Hilbert scheme. 
We assume $n\geq 3$, because $\mathrm{Hilb}_D^2$ is irreducible and smooth \cite[Theorem 2.4]{Fo}. 

An almost revlex ideal is strongly stable, hence it is stable and Borel-fixed in every characteristic. 
Using Theorems \ref{thm:singStable} and \ref{thm:mingen}, we obtain a sufficient condition for an Artinian almost revlex ideal $J$ to be a singular point in the Hilbert scheme $\mathrm{Hilb}_D^n$ in terms of the Hilbert function $H$ of $R/J$ only, where recall that $D=\sum_{j\geq 0} H(j)$. We can re-state  Corollary \ref{cor:lower bound} and Theorem \ref{thm:singStable} in the following way, indeed. 

\begin{lemma}\label{lemma:Hc1}
If $H$ is a Hilbert function admitting an Artinian almost revlex ideal $J\subset R$, then 
$\vert\{\tau \in B_J: \tau/x_n \in \mathbb T\}\vert=H(c_1)$.
\end{lemma}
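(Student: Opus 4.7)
The plan is to identify the set $\{\tau\in B_J:\tau/x_n\in\mathbb T\}$ with the set of minimal generators of $J$ having $x_n$ as minimal variable, and then to read its cardinality off from the proof of Theorem~\ref{thm:mingen}.

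First I would observe that, since $x_n$ is the smallest variable with respect to the degrevlex order, for any term $\tau\in\mathbb T$ the condition $\tau/x_n\in\mathbb T$ is equivalent to $x_n\mid\tau$, which in turn is equivalent to $\min(\tau)=x_n$. Consequently,
\[
\{\tau\in B_J:\tau/x_n\in\mathbb T\}=\{\tau\in B_J:\min(\tau)=x_n\}.
\]
Because $J$ is Artinian we have $\delta=0$, and the case $s=0$ of the enumeration carried out in the proof of Theorem~\ref{thm:mingen} shows that the number of minimal generators of $J$ with minimal variable $x_n$ is precisely $\Delta^0 H(r_1)=H(r_1)$. An appeal to Proposition~\ref{prop:r=c} then gives $r_1=c_1$, so that $\vert\{\tau\in B_J:\tau/x_n\in\mathbb T\}\vert=H(c_1)$.

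A hands-on alternative, which I would keep in reserve, is to first invoke Lemma~\ref{lemma:varie} to rewrite the cardinality as $\vert\mathcal N(J)\cap(J:x_n)\vert$, and then to compute this number degree by degree as the kernel of multiplication by $x_n$ on $R/J$. The argument used in the proof of Lemma~\ref{lemma:deltaH} shows that this kernel vanishes in degrees strictly below $r_1$, while formula \eqref{eq:espansione} forces $\mathcal N(J)_t\cap K[x_1,\dots,x_{n-1}]=\emptyset$ for every $t>r_1$, so that multiplication by $x_n$ becomes surjective from degree $r_1$ onward; a telescoping sum then produces $H(r_1)$, which Proposition~\ref{prop:r=c} converts into $H(c_1)$.

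Since every ingredient has been set up in the previous sections, no real obstacle is expected; the only care needed is to notice that an almost revlex ideal is strongly stable (hence stable), so that Theorem~\ref{thm:mingen}, Proposition~\ref{prop:r=c}, Lemma~\ref{lemma:deltaH} and Lemma~\ref{lemma:varie} all apply to $J$.
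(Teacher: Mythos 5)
Your proof is correct and follows essentially the same route as the paper, whose proof of this lemma is literally the single sentence that the statement follows from the arguments in the proof of Theorem~\ref{thm:mingen}; your main argument just makes explicit the case $s=\delta=0$ of that proof (generators with minimal variable $x_n$ number $\Delta^0H(r_1)=H(r_1)$) together with $r_1=c_1$ from Proposition~\ref{prop:r=c}. The identification $\{\tau\in B_J:\tau/x_n\in\mathbb T\}=\{\tau\in B_J:\min(\tau)=x_n\}$ and the reserve argument via Lemma~\ref{lemma:varie} are both sound but add nothing beyond what the paper intends.
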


\begin{proof}
The statement follows from the arguments of the proof of Theorem \ref{thm:mingen}.
\end{proof}

\begin{theorem}\label{th:singRevlex}
Let $H$ be a Hilbert function admitting an Artinian almost revlex ideal $J\subset R$ and $D=\sum_{j\geq 0} H(j)$. 
\begin{enumerate}[(i)]
\item\label{th:sing_i} $\dim \mathcal T_J \geq \left(\sum_{s=0}^{n-1} \Delta^s H(c_{s+1})\right) \cdot H(c_1) > H(c_1)^2$.
\item\label{th:sing_ii} if either $\left(\sum_{s=0}^{n-1} \Delta^s H(c_{s+1})\right)\cdot H(c_1)> n\cdot D$ or $H(c_1)^2 \geq n\cdot D$, then $J$ is a singular point in $\mathrm{Hilb}_{D}^n$.
\end{enumerate}
\end{theorem}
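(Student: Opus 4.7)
The plan is to assemble (i) as a direct substitution of two earlier identities into the lower bound of Corollary~\ref{cor:lower bound}, and then to deduce (ii) from (i) in essentially the same way that Theorem~\ref{thm:singStable} is deduced from Corollary~\ref{cor:lower bound}.

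For part (i), first I would invoke Corollary~\ref{cor:lower bound}, which applies because every almost revlex ideal is stable, to obtain
\[
\dim \mathcal T_J \;\geq\; \vert B_J\vert \cdot \vert\{\tau\in B_J : \tau/x_n\in\mathbb T\}\vert.
\]
Next I would rewrite $\vert B_J\vert$ using the $\delta=0$ case of Theorem~\ref{thm:mingen} as $\sum_{s=0}^{n-1}\Delta^s H(c_{s+1})$, and rewrite $\vert\{\tau\in B_J : \tau/x_n\in\mathbb T\}\vert$ as $H(c_1)$ by Lemma~\ref{lemma:Hc1}. That already gives the first inequality in (i).

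For the strict inequality $\bigl(\sum_{s=0}^{n-1}\Delta^s H(c_{s+1})\bigr)\cdot H(c_1) > H(c_1)^2$, I would show $\sum_{s=0}^{n-1}\Delta^s H(c_{s+1}) > H(c_1)$. The $s=0$ term of the sum is $H(c_1)$ itself, so it suffices to exhibit one strictly positive term with $s\geq 1$. I would take $s=n-1$: Proposition~\ref{prop:r=c} combined with Remark~\ref{rem:zerodivisori}(\ref{it:zerodiv_i}) yields $c_n=r_n\leq r_{n-1}=c_{n-1}$, so by definition of $c_{n-1}$ one has $\Delta^{n-1}H(c_n)>0$. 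Together with $H(c_1)\geq 1$ (which holds because $\Delta H(c_1)>0$ forces $H(c_1)\geq H(0)=1$), this yields the strict inequality. Intuitively this positive summand records the unique minimal generator $x_1^{d_1}\in B_J$ of an Artinian stable ideal.

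For part (ii), in both cases the hypothesis implies $\dim \mathcal T_J > n\cdot D$ via (i): under the first hypothesis directly, and under the second through the chain $\dim \mathcal T_J > H(c_1)^2 \geq n\cdot D$. Then I would reproduce the argument in the proof of Theorem~\ref{thm:singStable}: an almost revlex ideal is strongly stable, hence Borel-fixed in every characteristic, so $J$ lies on the unique component of $\mathrm{Hilb}_D^n$ containing the lex-segment ideal, which has dimension $n\cdot D$. A tangent space of strictly larger dimension forces $J$ to be a singular point of that component, and therefore of $\mathrm{Hilb}_D^n$. There is no substantial obstacle here: the theorem is a bookkeeping assembly of Theorem~\ref{thm:mingen}, Lemma~\ref{lemma:Hc1} and Corollary~\ref{cor:lower bound}, sharpened by the elementary observation $c_n\leq c_{n-1}$.
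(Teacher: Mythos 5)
Your proposal is correct and follows essentially the same route as the paper: the first inequality of (i) is Corollary~\ref{cor:lower bound} combined with Theorem~\ref{thm:mingen} and Lemma~\ref{lemma:Hc1}, and (ii) is obtained by feeding (i) into the argument of Theorem~\ref{thm:singStable}. The only (cosmetic) divergence is in the strict inequality, where the paper notes $\vert B_J\vert \geq H(c_1)+n-1$ by counting the $n-1$ pure-power generators in $x_1,\dots,x_{n-1}$, while you exhibit the single positive summand $\Delta^{n-1}H(c_n)$ --- equivalent in substance, though you should justify $c_n\leq c_{n-1}$ directly from the definition of $c_s$ (since $r_n$ falls outside the range of Proposition~\ref{prop:r=c}) and note that the intermediate summands are likewise nonnegative, both of which follow from the same observation.
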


\begin{proof}
We can use Theorem \ref{thm:mingen} to write $\vert B_{J}\vert$ in terms of $H$ and its derivatives, because $H$ admits the almost revlex ideal $J$. So, the first inequality of item \eqref{th:sing_i} follows from Corollary \ref{cor:lower bound} and Lemma~\ref{lemma:Hc1}. For the second inequality it is enough to observe that $\vert B_J\vert \geq H(c_1)+n-1$, where $n-1$ counts the minimal generators of $J$ that are powers of the variable $x_1,\dots,x_{n-1}$. 
For item \eqref{th:sing_ii}, thanks to Theorem \ref{thm:singStable} and item \eqref{th:sing_i} we have the thesis. 
\end{proof}


As in the previous sections, we denote by $H^{[n]}$ the Hilbert function of the Artinian complete intersection generated by polynomials of degrees $2\leq d_1\leq \cdots \leq d_n$ and by $J^{[n]}$ the almost revlex ideal in $R$ such that the Hilbert function of $R/J^{[n]}$ is $H^{[n]}$. In this case, we have $D=\sum_{j\geq 0} H^{[n]}(j)=d_1\cdots d_n$. 

Theorem \ref{th:singRevlex} gives a sufficient condition for $J^{[n]}$ to be a singular point of $\mathrm{Hilb}^n_{D}$, that only involves $H^{[n]}$. 
We now collect some technical results in order to reach our aim.

\begin{lemma}\label{lem:Hc1}
Let $H^{[n]}$ be the Hilbert function of the Artinian complete intersection defined by the positive integers $d_1\leq\dots\leq d_n$. Then
\[H^{[n]}\left(c_1^{[n]}\right) >  \frac{d_1\cdots d_n}{\sum_{i=1}^n d_i}.
\] 
\end{lemma}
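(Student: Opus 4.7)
The approach is to combine three facts about $H^{[n]}$: its total mass, its support, and the location of its maximum. Since we are working with a complete intersection defined by forms of degrees $d_1\leq\cdots\leq d_n$, the quotient ring $R/I$ has $K$-dimension $d_1\cdots d_n$, hence
\[
\sum_{j\geq 0} H^{[n]}(j) \;=\; d_1\cdots d_n.
\]
By Theorem~\ref{th:symmetry}, the support of $H^{[n]}$ is exactly the interval $[0,m_n]$, where $m_n=\bigl(\sum_{i=1}^n d_i\bigr)-n$, so the sum above has exactly $m_n+1=\sum_{i=1}^n d_i - n +1$ nonzero summands.

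Next, by Remark~\ref{rem:c1 complete intersection} we have $c_1^{[n]}=\bar u_n$, and by Theorem~\ref{th:functions} the function $H^{[n]}$ is strictly increasing on $[0,\bar u_n]$ and decreasing on $[\bar u_n,m_n]$. Thus $H^{[n]}(c_1^{[n]})$ is the \emph{maximum} value attained by $H^{[n]}$. Bounding each of the $m_n+1$ summands by this maximum therefore yields
\[
d_1\cdots d_n \;=\; \sum_{j=0}^{m_n} H^{[n]}(j) \;\leq\; (m_n+1)\cdot H^{[n]}(c_1^{[n]}) \;=\; \Bigl(\textstyle\sum_{i=1}^n d_i - n + 1\Bigr)\cdot H^{[n]}(c_1^{[n]}).
\]

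Finally, since $n\geq 2$ (the lemma is applied in the setting of Section~\ref{sec:singrevlex}, where $n\geq 3$), we have $\sum_{i=1}^n d_i - n + 1 < \sum_{i=1}^n d_i$, and so
\[
H^{[n]}(c_1^{[n]}) \;\geq\; \frac{d_1\cdots d_n}{\sum_{i=1}^n d_i - n + 1} \;>\; \frac{d_1\cdots d_n}{\sum_{i=1}^n d_i},
\]
which is the desired strict inequality. The only subtle point — and the one worth double-checking — is that the strictness comes from the comparison $-n+1<0$ (i.e.\ $n\geq 2$) in the denominators, rather than from the averaging inequality itself; the averaging step alone would suffice once one has $n\geq 2$, so no finer analysis of where the maximum of $H^{[n]}$ is actually achieved is required.
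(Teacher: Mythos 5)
Your proof is correct and follows essentially the same route as the paper: both bound the total mass $d_1\cdots d_n=\sum_j H^{[n]}(j)$ by the number of nonzero values, $\sum_i d_i - n+1$, times the maximum $H^{[n]}(c_1^{[n]})$, and then get strictness from $\sum_i d_i - n + 1 < \sum_i d_i$. Your explicit note that this last step needs $n\geq 2$ is a small but accurate clarification of a point the paper leaves implicit.
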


\begin{proof}
From Theorem \ref{th:functions}, for every $d_1\leq \dots \leq d_n$, the value  $H^{[n]}\left(c^{[n]}_1\right)$ is the maximum that is assumed by the Hilbert function $H^{[n]}$. Remembering that $(\sum_{i=1}^n d_i)-n$ is the maximum integer at which $H^{[n]}$ assumes a non-null value  and $\sum_{j\geq 0} H^{[n]}(j)=d_1\cdots d_n$, we obtain 
\[H^{[n]}\left(c^{[n]}_1\right)\cdot\left(\sum_{i=1}^n d_i-n+1\right)\geq d_1\cdots d_n.\]
Using the fact $\sum_{i=1}^n d_i-n+1<\sum_{i=1}^n d_i$, we conclude.
\end{proof}
 
\begin{remark}\label{rem:improvement}
We can refine the statement of Lemma \ref{lem:Hc1} in the following way. Observe that $H^{[n]}(j)=\binom{n-1+j}{j}$, for every $0\leq j < d_1$. So, thanks to the symmetry of $H^{[n]}$ we have:
\begin{equation}\label{eq:improvement}
H^{[n]}\left(c_1^{[n]}\right) \geq \frac{d_1\cdots d_n-2\binom{n+d_1-1}{d_1-1}}{\sum_{i=1}^n d_i-n+1-2d_1}.
\end{equation}
\end{remark}


\begin{proposition}\label{prop:n-1=n}
Let $d_1\leq\cdots\leq d_n$ be positive integers and $D=d_1\cdots d_n$. The almost revlex ideal $J^{[n]}$ is a singular point of the Hilbert scheme $\mathrm{Hilb}^n_{D}$, if either of the following numerical conditions hold
\begin{enumerate}[(i)]
\item\label{it:n-1=n_i} $d_1\cdots d_n >n\left(\sum_{i=1}^n d_i\right)^2$;
\item\label{it:n-1=n_ii} $d_1\cdots d_{n-1} >n^3 d_n$;
\item \label{it:n-1=n_iii} $d_{n-1}=d_n$ and $d_1\cdots d_{n-2}\geq n^3$.
\end{enumerate}
\end{proposition}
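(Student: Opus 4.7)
The plan is to invoke Theorem \ref{th:singRevlex}\eqref{th:sing_ii} in the clean form $H^{[n]}(c_1^{[n]})^2 \geq n\cdot D$ with $D=d_1\cdots d_n$, and to verify each of the three numerical hypotheses by combining Lemma \ref{lem:Hc1} with elementary algebra. Because $H^{[n]}$ admits an almost revlex ideal (Theorem \ref{th:main}) and the complete intersection is Artinian, both auxiliary results apply directly.

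For \eqref{it:n-1=n_i}, I would plug the bound $H^{[n]}(c_1^{[n]}) > d_1\cdots d_n / \sum_{i=1}^n d_i$ from Lemma \ref{lem:Hc1} into the criterion, obtaining
\[
H^{[n]}(c_1^{[n]})^2 \;>\; \frac{(d_1\cdots d_n)^2}{\bigl(\sum_{i=1}^n d_i\bigr)^2} \;=\; \frac{D \cdot d_1\cdots d_n}{\bigl(\sum_{i=1}^n d_i\bigr)^2},
\]
and the right-hand side exceeds $nD$ precisely under hypothesis \eqref{it:n-1=n_i}.

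For \eqref{it:n-1=n_ii} and \eqref{it:n-1=n_iii}, the key refinement is to use the trivial bound $\sum_{i=1}^n d_i \leq n\, d_n$ before appealing to Lemma \ref{lem:Hc1}, which gives the cleaner estimate $H^{[n]}(c_1^{[n]}) > (d_1\cdots d_{n-1})/n$. Squaring and dividing by $nD = n\, d_n\,(d_1\cdots d_{n-1})$ yields
\[
\frac{H^{[n]}(c_1^{[n]})^2}{n\cdot D} \;>\; \frac{(d_1\cdots d_{n-1})^2}{n^3\, d_n \,(d_1\cdots d_{n-1})} \;=\; \frac{d_1\cdots d_{n-1}}{n^3\, d_n}.
\]
Under hypothesis \eqref{it:n-1=n_ii} the last fraction is strictly greater than $1$. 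For \eqref{it:n-1=n_iii}, substituting $d_{n-1}=d_n$ reduces the same fraction to $(d_1\cdots d_{n-2})/n^3$, which is $\geq 1$ by assumption; the strict inequality in Lemma \ref{lem:Hc1} still provides a strict $>$ on $H^{[n]}(c_1^{[n]})^2$, so the hypothesis $H(c_1)^2 \geq n D$ of Theorem \ref{th:singRevlex}\eqref{th:sing_ii} is satisfied in every case.

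There is no genuine obstacle here: once the criterion of Theorem \ref{th:singRevlex}\eqref{th:sing_ii} is isolated, each numerical condition is chosen exactly so that Lemma \ref{lem:Hc1}, possibly after replacing $\sum d_i$ by the crude upper bound $n d_n$, produces the required quadratic inequality. The only point requiring a little care is tracking strict versus non-strict inequalities in \eqref{it:n-1=n_iii}, which is handled by noting that Lemma \ref{lem:Hc1} is strict.
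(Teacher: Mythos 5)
Your proposal is correct and follows essentially the same route as the paper: both reduce to the criterion $H^{[n]}(c_1^{[n]})^2 > n\cdot D$ from Theorem \ref{th:singRevlex}, apply Lemma \ref{lem:Hc1} directly for case \eqref{it:n-1=n_i}, and use the crude bound $\sum_{i=1}^n d_i \leq n\, d_n$ for cases \eqref{it:n-1=n_ii} and \eqref{it:n-1=n_iii}. Your bookkeeping of strict versus non-strict inequalities in case \eqref{it:n-1=n_iii} is accurate and matches what the paper leaves implicit.
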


\begin{proof}
By Theorem \ref{th:singRevlex} it is sufficient to prove that $\left(H^{[n]}\left(c_1^{[n]}\right)\right)^2>n\cdot D$ if either of \eqref{it:n-1=n_i}, \eqref{it:n-1=n_ii} or \eqref{it:n-1=n_iii} holds.
If \eqref{it:n-1=n_i} holds, by Lemma \ref{lem:Hc1} we immediately have the thesis.
If \eqref{it:n-1=n_ii} holds, observing that $(\sum_{i=1}^n d_i)^2 \leq n^2 d_n^2$ and using  Lemma \ref{lem:Hc1} we have
\[
\left(H^{[n]}\left(c_1^{[n]}\right)\right)^2>\frac{(d_1\dots d_{n-1} d_n)^2}{n^2 d_n^2} \geq\frac{d_1\cdots d_n n^3 d_n^2}{n^2 d_n^2} =n\cdot D.
\]
Finally, if \eqref{it:n-1=n_iii} holds, we obtain the thesis by the same arguments of the previous case.
\end{proof}

\begin{corollary}\label{cor:corollario 1}
For all integers $ d_1\leq \dots \leq d_{n-1} \leq d_n$, if $d_{n-1}=d_n$ then the almost revlex ideal $J^{[n]}$ corresponds to a singular point in the Hilbert scheme in the following cases
\begin{enumerate}[(i)]
\item\label{it:cor1_i} $d_1\geq 2$ and $n\geq 14$;
\item\label{it:cor1_ii} $d_1\geq 3$ and $n\geq 8$;
\item\label{it:cor1_iii} $d_1\geq 4$ and $n\geq 6$;
\item \label{it:cor1_iv}$d_1\geq 5$ and $n\geq 5$;
\item \label{it:cor1_v}$d_1\geq 8$ and $n\geq 4$;
\item\label{it:cor1_vi} $d_1\geq 27$ and $n\geq 3$.
\end{enumerate}
\end{corollary}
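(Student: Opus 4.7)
The plan is to reduce each case to the numerical hypothesis of Proposition \ref{prop:n-1=n}\eqref{it:n-1=n_iii}, which ensures that $J^{[n]}$ corresponds to a singular point of $\mathrm{Hilb}^n_D$ whenever $d_{n-1}=d_n$ and $d_1\cdots d_{n-2}\geq n^3$. Since $d_1\leq d_i$ for every $i$, one has $d_1\cdots d_{n-2}\geq d_1^{n-2}$, so it suffices to verify the simpler inequality $d_1^{n-2}\geq n^3$ under the hypotheses of each of the six items.

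First I would check the inequality at the minimal admissible corner $(d_1,n)$ of each item: for (i) $2^{12}=4096\geq 14^3=2744$; for (ii) $3^6=729\geq 8^3=512$; for (iii) $4^4=256\geq 6^3=216$; for (iv) $5^3=125=5^3$; for (v) $8^2=64=4^3$; and for (vi) $27^1=27=3^3$. The last three cases hold with equality, while the first three hold with room to spare.

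Next I would verify that the inequality persists as one increases $d_1$ or $n$ beyond the minimum. Increasing $d_1$ only strengthens $d_1^{n-2}\geq n^3$. For fixed $d_1=a$, replacing $n$ by $n+1$ multiplies the ratio $a^{n-2}/n^3$ by $a\,(n/(n+1))^3$, which is at least $1$ as soon as $n\geq 1/(a^{1/3}-1)$. This threshold is approximately $3.85,\,2.27,\,1.71,\,1.43,\,0.95,\,0.52$ for $a=2,3,4,5,8,27$ respectively, and in every item the stated lower bound on $n$ lies above it. Hence $a^{n-2}/n^3$ is non-decreasing along the admissible range, so $d_1^{n-2}\geq n^3$ holds throughout each case, and Proposition \ref{prop:n-1=n}\eqref{it:n-1=n_iii} yields the conclusion.

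The only mildly technical step is the monotonicity check in $n$; everything else is arithmetic at the corners. Note that since the bounds of Proposition \ref{prop:n-1=n}\eqref{it:n-1=n_iii} are themselves non-sharp (as remarked after Theorem \ref{thm:singStable}), one may well ask whether strictly weaker numerical hypotheses than (i)--(vi) already force singularity of $J^{[n]}$; but that refinement would require returning to the tangent space computation of Section~\ref{sec:application} rather than the clean Lemma~\ref{lem:Hc1} estimate, and lies beyond what the present corollary claims.
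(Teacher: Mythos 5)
Your proof is correct and follows essentially the same route as the paper: the paper likewise reduces every case to the bound $d_1\cdots d_{n-2}\geq d_1^{n-2}\geq n^3$, verified by induction on $n$, and then invokes Proposition~\ref{prop:n-1=n}\eqref{it:n-1=n_iii}; you have merely made that induction explicit via the corner checks and the monotonicity of $a^{n-2}/n^3$ in $n$. The only blemish is the threshold value $0.95$ for $a=8$, which should be exactly $1=1/(8^{1/3}-1)$, but this is harmless since $n\geq 4>1$ in that case.
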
 

\begin{proof}
By induction on $n$, we obtain $d_1\cdots d_{n-2}\geq d_1^{n-2}\geq n^3$ in all the cases that are listed in the statement. Then, it is enough to apply Proposition \ref{prop:n-1=n}\eqref{it:n-1=n_iii}. 
\end{proof}


\begin{corollary}\label{cor:d}
For every $n\geq 3$ and $2\leq d=d_1=d_n$, $J^{[n]}$ corresponds to a singular point in the Hilbert scheme $\mathrm{Hilb}_{d^n}^n$. 
\end{corollary}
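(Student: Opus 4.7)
The plan splits naturally into an asymptotic part handled by the sufficient conditions already established, and a finite exceptional list handled by explicit computation. The hypothesis $d=d_1=d_n$ together with $d_1\leq d_2\leq\cdots\leq d_n$ forces $d_1=d_2=\cdots=d_n=d$, so in particular $d_{n-1}=d_n$ and $d_1\cdots d_{n-2}=d^{n-2}$. Hence Proposition \ref{prop:n-1=n}\eqref{it:n-1=n_iii}, i.e.\ Corollary \ref{cor:corollario 1}, applies whenever $d^{n-2}\geq n^3$ and settles all pairs $(d,n)$ except the finite list with $d=2,\ n\leq 13$; $d=3,\ n\leq 7$; $d=4,\ n\leq 5$; $d\in\{5,6,7\},\ n\leq 4$; and $d\in\{8,\ldots,26\},\ n=3$.

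For each remaining pair I would proceed in two stages. First, I would sharpen the lower bound on $H^{[n]}(c_1^{[n]})$ by invoking Remark \ref{rem:improvement}, which in the uniform-degree case reads
\[
H^{[n]}\!\left(c_1^{[n]}\right)\ \geq\ \frac{d^n-2\binom{n+d-1}{d-1}}{(n-2)d-n+1},
\]
and plug this into the second alternative of Theorem \ref{th:singRevlex}\eqref{th:sing_ii} to check whether $H^{[n]}(c_1^{[n]})^2\geq n\cdot d^n$ already holds. This dispatches several additional pairs without any explicit tangent-space calculation. Second, for the short list of cases that resist even the refined estimate (typically those with $d\in\{2,3\}$ and small $n$), I would construct $J^{[n]}$ via Algorithm \ref{alg:almost}, compute the linear part of the defining equations of the marked scheme $\mathrm{Mf}(J^{[n]})$ by the procedure recalled in Remark \ref{rem:efficient way}, and verify that $\dim \mathcal T_{J^{[n]}}$ strictly exceeds the dimension $n\cdot d^n$ of the lex component. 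The tables of these numerical verifications, organized by $d$, would constitute Examples \ref{ex:d=2} and \ref{ex:d=3} promised in the remark following Theorem \ref{thm:singStable}.

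The main obstacle is the small-parameter corner, especially $d=2$ with $n\in\{3,4,5\}$: here $H^{[n]}(j)=\binom{n}{j}$ and the maximum value $H^{[n]}(c_1^{[n]})=\binom{n}{\lfloor n/2\rfloor}$ is too small relative to $n\cdot 2^n$ for any closed-form criterion based solely on $H(c_1)$ to succeed (for instance $n=3$ gives $9\not\geq 24$). In those cases one genuinely cannot avoid the explicit tangent-space computation at $J^{[n]}$, and must exploit the full strength of Theorem \ref{th:lower bound} by individually counting the parameters $C_{\alpha\beta}$ that are absent from the linear part of $\mathcal U$, rather than settling for the cruder bound $|B_J|\cdot|\{\tau\in B_J:\tau/x_n\in\mathbb T\}|$ of Corollary \ref{cor:lower bound}.
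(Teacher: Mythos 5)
Your proposal follows essentially the same route as the paper: Proposition \ref{prop:n-1=n}\eqref{it:n-1=n_iii} (i.e.\ Corollary \ref{cor:corollario 1}) for $d^{n-2}\geq n^3$, the refined bound \eqref{eq:improvement} fed into Theorem \ref{th:singRevlex} for the intermediate cases, and explicit computations for the smallest ones, with the same exceptional list. The only inaccuracy is your claim that the explicit tangent-space computation is unavoidable for $d=2$, $n\in\{4,5\}$: there the first alternative of Theorem \ref{th:singRevlex}\eqref{th:sing_ii} already applies ($\vert B_{J^{[4]}}\vert\cdot H^{[4]}(c_1)=12\cdot 6=72>64$ and $\vert B_{J^{[5]}}\vert\cdot H^{[5]}(c_1)=18\cdot 10=180>160$), and in the paper only $(d,n)=(2,3)$ and $(3,3)$ actually require computing $\dim\mathcal T_{J^{[n]}}$ directly.
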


\begin{proof}
For $d=2,3,4$ the statement holds thanks to items \eqref{it:cor1_i}, \eqref{it:cor1_ii}, \eqref{it:cor1_iii} of Corollary \ref{cor:corollario 1} and by direct computations that are collected in next Examples \ref{ex:d=2}, \ref{ex:d=3}, \ref{ex:d=4}. For $d\geq 5$ and $n\geq 5$ the statement holds due to item \eqref{it:cor1_iv} of Corollary \ref{cor:corollario 1}. So, the case $d\geq 5$ with $n=3,4$ remains open. 

More precisely, thanks to items  \eqref{it:cor1_v} and \eqref{it:cor1_vi} of Corollary \ref{cor:corollario 1} we have to focus on $d=5,6,7$ with $n=4$ and $5\leq d \leq 26$ with $n=3$. Except for $d=5$ with $n=3$, in each of these cases we obtain the thesis using formula \eqref{eq:improvement} and Theorem \ref{th:singRevlex}. For $d=5$ with $n=3$, we compute $\vert B_{J^{[3]}}\vert \cdot H^{[3]}(c_1^{[3]})=25\cdot 19=475>375=3\cdot 5^3$ and conclude by Theorem \ref{th:singRevlex}
\end{proof}

\begin{example}\label{ex:d=2}
Let $2=d_1=d_n$. We show that $J^{[n]}$ is a singular point for every  $3\leq n\leq 13$. 

If $n$ is even, then $c_1=\frac{n}{2}$ and $H^{[n]}(c_1)=\binom{n}{\frac{n}{2}}$ because

\begin{small}
\begin{tabular}{r| cccccccccc }
$t$   & $0$ & $1$ & $2$ & $\dots$ & $\frac{n}{2}$ & $\frac{n}{2}+1$ & $\dots$&$n-1$&$n$ \\
\hline
$H^{[n]}(t)$ &$1$ & $\binom{n}{1}$ & $\binom{n}{2} $& $\dots$& $\binom{n}{\frac{n}{2}}$ & $\binom{n}{\frac{n}{2}-1}$ & $\dots$ &$\binom{n}{1}$&$ 1$
\end{tabular}
\end{small}

\vskip 1mm
If $n$ is odd, then $c_1=\frac{n-1}{2}$ and $H^{[n]}(c_1)= \binom{n}{\frac{n-1}{2}}$ because

\begin{small}
\begin{tabular}{r| cccccccccc }
$t$   & $0$ & $1$ & $2$ & $\dots$ & $\frac{n-1}{2}$ & $\frac{n+1}{2}$ & $\dots$&$n-1$&$n$ \\
\hline
$H^{[n]}(t)$ &$1$ & $\binom{n}{1}$ & $\binom{n}{2} $& $\dots$& $\binom{n}{\frac{n-1}{2}}$ & $\binom{n}{\frac{n-1}{2}}$ & $\dots$ &$\binom{n}{1}$&$ 1$
\end{tabular}
\end{small}

\noindent If $6\leq n\leq 13$, we obtain $H^{[n]}(c_1^{[n]})^2 > n\cdot 2^n$ by an explicit computation.\\
If $n=5$, we find $\vert B_{J^{[5]}}\vert \cdot H^{[5]}(c_1^{[5]})=18\cdot 10=180 > 160=5\cdot 2^5$.\\ 
If $n=4$, we find $\vert B_{J^{[4]}}\vert \cdot H^{[4]}(c_1^{[4]})=12\cdot 6=72 > 64=4\cdot 2^4$.\\
If $n=3$, we have $\vert B_{J^{[3]}}\vert \cdot H^{[3]}(c_1^{[3]})=6\cdot 3=18<3\cdot 2^3$. Hence, in order to prove that $J^{[3]}$ corresponds to a singular point in the Hilbert scheme $\mathrm{Hilb}_{2^3}^3$, we need to make a direct computation which gives $\dim \mathcal T_{J^{[3]}}=36>24=3\cdot 2^3$.
\end{example}

\begin{example}\label{ex:d=3}
Let $3=d_1=d_n$. For $3 \leq n\leq 7$ we have the following Hilbert functions, respectively:

\begin{small}
\begin{tabular}{r| cccccccccc }
$t$   & $0$ & $1$ & $2$ & $3$ & $4$ & $5$ & $6$ & $7$ & $8$ \\
\hline
$H^{[3]}(t)$ &$1$ & $3$ & $6 $& $7$& $6$ & $3$ & $1$\\
$H^{[4]}(t)$ &$1$ & $4$ & $10 $& $16$& $19$ & $16$ & $\dots$\\
$H^{[5]}(t)$ &$1$ & $5$ & $15 $& $30$& $45$ & $51$ & $45$ &$\dots$\\
$H^{[6]}(t)$ &$1$ & $6$ & $21 $& $50$& $90$ & $126$ & $141$& $126$& $\dots$\\
$H^{[7]}(t)$ &$1$ & $7$ & $28 $& $77$& $161$ & $266$ & $357$& $393$& $357$ & $\dots$
\end{tabular}
\end{small}

\noindent If $n=3$, then $H^{[3]}(c_1)=7$, $\Delta H^{[3]}(c_2)=3$, $\Delta^2 H^{[3]}(c_3)=1$ and $(7+3+1)\cdot 7=77 < 3\cdot 3^3$, so that in this case we cannot apply Theorem \ref{th:singRevlex}. Nevertheless, we conclude that $J^{[3]}$ corresponds to a singular point of the Hilbert scheme $\mathrm{Hilb}_{27}^3$ because  by a direct computation we obtain $\dim \mathcal T_J= 147 > 3\cdot 27$. \\ 
If $n=4$, then  $H^{[4]}(c_1)^2=19^2 > 324=4\cdot 3^4$.\\
If $n=5$, then $H^{[5]}(c_1)^2=51^2 >1215=5\cdot 3^5$.\\
If $n=6$, then $H^{[6]}(c_1)^2=141^2  > 4374=6\cdot 3^6$.\\
If $n=7$, then $H^{[7]}(c_1)^2=393^2 > 7\cdot 3^7$.

\noindent In conclusion, if $3=d_1=d_n$ and $3\leq n\leq 7$, then $J^{[n]}$ is a singular point thanks to either Theorem \ref{th:singRevlex} or a direct computation of the dimension of the Zariski tangent space.
\end{example}

\begin{example}\label{ex:d=4}
Let $4=d_1=d_n$. For $n=3,4,5$ we have the following Hilbert functions, respectively:
\vskip 1mm
\begin{small}
\begin{tabular}{r| cccccccccccc } 
$t$   & $0$ & $1$ & $2$ & $3$ & $4$ & $5$ & $6$ & $7$ & $8$ & $9$ &$\dots$\\
\hline
$H^{[3]}(t)$ &$1$ & $3$ & $6 $& $10$& $12$ & $12$ & $10$ &$\dots$ & & & \\
$H^{[4]}(t)$ &$1$ & $4$ & $10 $& $20$& $31$ & $40$ & $44$& $40$& $\dots$ & & \\
$H^{[5]}(t)$ &$1$ & $5$ & $15 $& $35$& $65$ & $101$ & $135$& $155$& $155$ & $135$ &$\dots$
\end{tabular}
\end{small}
\vskip 1mm
\noindent If $n=3$, then $H^{[3]}(c_1)=12$, $\Delta H^{[3]}(c_2)=4$, $\Delta^2 H^{[3]}(c_3)=1$ and $(12+4+1) 12 > 192= 3\cdot 4^3$.\\ 
If $n=4$, then $H^{[4]}(c_1)^2=44^2>1024=4\cdot 4^4$.\\
If $n=5$, then $H^{[5]}(c_1)^2=155^2 > 5\cdot 4^5$. 

\noindent In conclusion, if $4=d_1=d_n$ and $n=3,4,5$, $J^{[n]}$ is a singular point due to Theorem \ref{th:singRevlex}.
\end{example}

\section*{Acknowledgment}

The authors are grateful to Margherita Roggero for several useful conversations on the topics of the paper, to Keith Pardue for valuable suggestions on a previous draft, and to Davide Franco for helpful comments.

The authors are members of \lq\lq National Group for Algebraic and Geometric Structures, and their Applications\rq\rq \ (GNSAGA - INDAM). The second author  is partially supported by Prin 2015 (2015EYPTSB\_011 - {\em Geometry of Algebraic varieties}, Unit\`{a} locale Tor Vergata, CUP  E82F16003030006).

\bibliographystyle{amsplain}
\providecommand{\bysame}{\leavevmode\hbox to3em{\hrulefill}\thinspace}
\providecommand{\MR}{\relax\ifhmode\unskip\space\fi MR }
\providecommand{\MRhref}[2]{%
  \href{http://www.ams.org/mathscinet-getitem?mr=#1}{#2}
}
\providecommand{\href}[2]{#2}

\end{document}